\theoremstyle{plain}
\newtheorem{theorem}{Theorem}[section]
\newtheorem{lemma}[theorem]{Lemma}
\newtheorem{proposition}[theorem]{Proposition}
\newtheorem{corollary}[theorem]{Corollary}
\theoremstyle{definition}
\newtheorem{example}{Example}[section]
\theoremstyle{remark}
\newtheorem*{remark}{Remark}
\newcommand{\Z}{\mathbb{Z}}
\newcommand{\R}{\mathbb{R}}
\newcommand{\C}{\mathbb{C}}
\newcommand{\mc}{\mathcal}
\newcommand{\la}{\langle}
\newcommand{\ra}{\rangle}
\newcommand*\diff{\mathop{}\!\mathrm{d}}
\begin{document}
\begin{center}
{\Huge
Sparse Control of Quantum Systems}
\normalsize
\\[4mm]
\textsc{Gero Friesecke$^*$, Felix Henneke$^{*}$, and Karl Kunisch$^{**}$} \\[4mm]
$^*$Faculty of Mathematics, TU Munich, Germany \\ \texttt{gf@ma.tum.de, henneke@ma.tum.de} \\
$^{**}$Institute for Mathematics and Scientific Computing, University of Graz, Austria \\
\texttt{karl.kunisch@uni-graz.at} \\
\end{center}

\vspace{0.4in}

\noindent {\normalsize \textbf{ Abstract.} A new class of cost functionals for optimal control of quantum systems which produces controls which are sparse in frequency and smooth in time is proposed.
This is achieved by penalizing a suitable time-frequency representation of the control field, rather than the control field itself, and by employing norms which are of $L^1$ or measure form with respect to frequency but smooth with respect to time.

We prove existence of optimal controls for the resulting nonsmooth optimization problem, derive necessary optimality conditions, and rigorously establish the frequency-sparsity of the optimizers. More precisely, we show that the time-frequency representation of the control field, which a priori admits a continuum of frequencies, is supported on only \textit{ finitely many} frequencies.
These results cover important systems of physical interest, including (infinite-dimensional) Schr\"odinger dynamics on multiple potential energy surfaces as arising in laser control of chemical reactions.
Numerical simulations confirm that the optimal controls, unlike those obtained with the usual $L^2$ costs, concentrate on just a few frequencies, even in the infinite-dimensional case of laser-controlled chemical reactions.
\vspace{0.4in}

\section{Introduction}
This paper is motivated by application problems of current interest in quantum control~\cite{Getal:2015:EPJD}, which range from steering chemical reactions~\cite{BKZB:2008:ACP} over creating excited or ionized states~\cite{HRG:2013:PRA} to faithfully storing and manipulating bits of quantum information \cite{SKSCKMDHMCNJ:2014:NJP}.

We propose a new class of cost functionals for the optimal control of quantum systems which result in controls with a sparse time-frequency structure.
This is achieved via two key ideas.

First, we do not penalize the time profile of the field amplitude but a suitable \emph{time-frequency representation} of it.
While such representations are a familiar tool to interpreting or analyzing a given field in control and signal analysis, they here acquire center stage already in the design of the controls.

Second, we build upon recent advances in the optimal control theory of elliptic and parabolic systems related to the basic idea \cite{VM:2006:OCAM,S:2009:COAP,CK:2011:COCV,CHW:2012:SJO,HSW:2012:SJCO} of \emph{sparsity-enhancing $L^1$ or measure-norm costs}.
More specifically, we build upon the idea of
function-valued measures to achieve directional sparsity in parabolic control~\cite{KPV:2014:SJCO}.
The novelty as compared to the latter advances is two-fold: first, in quantum control, unlike in parabolic control, the target for sparsity should not be the field amplitude, but its frequency structure,
and second, one is dealing with a bilinear instead of a linear control problem.

These ideas result in constrained non-smooth optimization problems of the form
\begin{equation} \label{intro:1}
  \mbox{Minimize } \frac12 \langle\psi(T), {\mc O}\psi(T)\rangle + \alpha ||u||_{{\mc M}} \mbox{ over controls } u\colon \Omega\times[0,T]\to \C
\end{equation}
subject to
\begin{equation} \label{intro:2}
  i\partial_t\psi = (H_0 + (Bu)(t)H_1)\psi, \;\;
  \psi(0)=\psi_0.
\end{equation}
Here $H_0$ is the Hamiltonian of the quantum system, $B$ is a `synthesis operator' which assembles the control field from a time-frequency representation $u(\omega, t)$, and $\| \, \cdot \, \|_{{\mc M}}$ is an $L^1$ or measure norm with respect to frequency but a smoothness-promoting norm with respect to time.
A prototypical choice is the time-frequency synthesis operator
\begin{equation} \label{intro:B}
  (Bu)(t) = \int_{\Omega} u(\omega,t) \, e^{i\omega t} \diff\omega,
\end{equation}
where $\Omega\subset\R$ is a region of admissible frequencies, and
\begin{equation} \label{intro:M}
  \lVert u\rVert_{{\mc M}} = \int_{\Omega} \lVert u(\omega,\cdot)\rVert_{H^1(0,T)} \diff\omega.
\end{equation}
Note that the control, a priori, can use the whole available continuum of frequencies, with each frequency possessing its own time profile.
A main result of this paper (Theorem~\ref{thm:finite_supp}) is that the optimizers utilize only \textit{ finitely many} frequencies, even when the quantum dynamics is a full infinite-dimensional Schr\"odinger dynamics on multiple potential energy surfaces (Ex.~\ref{ex:mol2} and Section~\ref{sec:numerics_mol2}).
Equations \eqref{intro:B}--\eqref{intro:M} replace the standard approach in quantum optimal control initiated by \cite{PDR:1988:PR} to penalize just the $L^2$ or $H^1$ norm of the field amplitude (see \cite{IK:2007:SJCO,vWB:2008:IP,HMMS:2013:SJCO} for mathematical results).

Numerical simulations presented in Section~\ref{sec:numerics} below illustrate that
the optimizers concentrate on just a few frequencies, even when the quantum dynamics is a full infinite-dimensional Schr\"odinger dynamics.
Thus our controls share an important feature of laser pulses designed by experimentalists.

The plan of this paper is as follows.
In the next section we introduce three important physical examples of
controlled quantum dynamical systems
which motivated this work and to which our sparsity results apply. These examples also serve to recall basic features of the coupling operators such as `forbidden transitions' and the oscillatory nature of quantum controls.
In Section~\ref{sec:control} we introduce our measure-norm sparsity-enhancing costs within a general functional-analytic framework, and give several examples.
In particular, the choices \eqref{intro:B}--\eqref{intro:M} lead to frequency-sparsity with global time profiles, and appropriate modifications lead to frequency-sparsity with local time profiles.
Section~\ref{sec:theory} is devoted to the mathematical analysis of the non-smooth optimal control problem \eqref{intro:1}--\eqref{intro:2}.
We recall the relevant well known results on the existence of dynamics, establish existence of optimal controls, derive necessary optimality conditions, and prove that optimal controls are supported on only finitely many frequencies.
Finally, in Section~\ref{sec:numerics} we numerically calculate optimal controls and compare them to those obtained from the usual $L^2$ penalization of the field amplitude.
Specifically, we present a 3-level example which arises in atomic excitation problems, and an example of Schr\"odinger dynamics on two potential energy surfaces as arising in laser-controlled chemical reaction dynamics.
\section{Quantum dynamics with controls: physical examples}\label{sec:models}
The evolution equations in quantum control problems typically have the structure
\begin{equation} \label{eq:evolution}
  i \partial_t\psi(t) = \left( H_0 + \sum_{\ell=1}^L v_\ell(t)H_\ell\right)\psi(t),
\end{equation}
where the state $\psi(t)$ belongs to some Hilbert space $\mc H$, $H_0$ and the $H_\ell$ are (bounded or unbounded) self-adjoint operators on $\mc H$, and the $v_\ell(t)$ are real-valued scalar amplitudes of components of applied electric or magnetic fields.
The operator $H_0$ is the Hamiltonian of the system in the absence of fields, and the operators $H_\ell$ describe the system-field coupling.

A general mathematical reference for finite-dimensional problems of form~\eqref{eq:evolution} is~\cite{D:2008:CH}.
In the mathematical control theory literature, general aspects of infinite-dimensional problems of the above form such as existence of optimal controls with $L^2$ or $H^1$ penalization of the field have been previously treated~\cite{IK:2007:SJCO,vWBV:2009:SJSC}.
Here our goal is to mathematically understand the highly oscillatory nature of optimal controls, unfamiliar from elliptic and parabolic control problems but arising in the physics of \emph{Bohr frequencies}, and to develop novel penalty terms to simplify this oscillatory structure.

We note that equation~\eqref{eq:evolution} already contains two important approximations which are valid in many situations of interest.
First, quantum fluctuations of the field amplitudes can be neglected, that is to say we are dealing with classical fields and do not need to move to the much more complicated framework of quantum field theory.
Second, the spatial wavelength of the applied fields is much larger than the localization length of the state $\psi(t)$, so that it is sufficient to assume that the field strengths depend on time only.
This is often called `dipole approximation', see e.g.~\cite{S:1990:W}.

We now give three examples for~\eqref{eq:evolution} of physical interest.
The first one has been included to recall the quantum mechanical meaning of oscillatory controls which forms the starting point for the time-frequency approach developed here; the other two will be used in the simulations in this paper.
\begin{example}
(Spin of a spin 1/2 particle in a magnetic field)
This is the simplest control system of physical interest.
It arises as a basic example in NMR, and more recently as a model of a single qubit in quantum information theory (see~\cite{SKSCKMDHMCNJ:2014:NJP} for a recent careful experimental realization of this system and~\cite{BCGGJ:2002:JMP} for rigorous mathematical results).
It already exhibits surprisingly many features of complex systems.
The spin at time $t$ is a unit vector in the Hilbert space $\mc H=\C^2$.
The general evolution equation of a spin in a time-dependent magnetic field $B \colon \R \to \R^3$ is
\[
  i\partial_t \psi(t) = - \gamma B(t)\cdot S \, \psi(t) = - \gamma \sum_{\ell=1}^3 B_\alpha(t) S_\alpha \, \psi(t)
\]
where the component operators $S_\alpha$ of the spin operator $S$ are given by $\hbar/2$ times the Pauli matrices, i.e.\ in atomic units ($\hbar=1$)
\[
  S_1 = \frac12 \begin{pmatrix} 0 & 1 \\ 1 & 0\end{pmatrix}, \quad
  S_2 = \frac12 \begin{pmatrix} 0 & -i \\ i & 0\end{pmatrix}, \quad
  S_3 = \frac12 \begin{pmatrix} 1 & 0 \\ 0 & -1\end{pmatrix}.
\]
The factor $\gamma$ depends on the type of particle (electron, proton, neutron, nucleus) and can be positive or negative.
A typical control problem in NMR consists of taking $B_3$ time-independent and comparatively large, and $B_1$ and $B_2$ as time-dependent controls which are small.
This is a system of form~\eqref{eq:evolution}, with $H_0 = -\gamma B_3 S_3$.
Denoting the two eigenvalues of $H_0$ by $E_1$, $E_2$, this system can be written in the compact form
\[
  i\partial_t\psi = \begin{pmatrix} E_1 & 0 \\ 0 & E_2\end{pmatrix} \psi +
                    \begin{pmatrix} 0 & v^*(t) \\ v(t) & 0 \end{pmatrix}\psi
\]
with complex-valued control $v(t) = -2\gamma(B_1(t) + i B_2(t))$, and $v^*$ its complex conjugate.
The basic case of a time-harmonic control field $v(t)=Ae^{i\omega t}$ is exactly soluble~\cite{R:1937:PR}.
This allows one to understand mathematically the emergence of \emph{oscillatory controls} and \emph{Bohr frequencies}.
The Bohr frequency of a transition between two quantum states is the eigenvalue difference.
The time-harmonic control with this frequency, when applied over a time window of suitable length, achieves a 100 $\%$ transfer; and it is the \emph{only} time-harmonic control which achieves a 100 $\%$ transfer~\cite{R:1937:PR}.
For more general quantum systems relations of this kind typically only hold asymptotically~\cite{C:2012:A}.
\end{example}
\begin{example}\label{ex:atom}
(Electronic states of atoms in laser fields)
A standard reference in the physics literature is~\cite{S:1990:W}.
Consider an atom with N electrons of charge $e = -1$ and a nucleus of charge $Z=N$ clamped at the origin.
The electronic state of the atom is described by a function belonging to the Hilbert space $\mc H =\{\psi\in L^2({(\R^3\times\Z_2)}^N) \, | \, \psi \mbox{ antisymmetric}\}$.
That is to say electronic states are functions $\psi=\psi(x_1,s_1,\dotsc,x_N,s_N)$ which depend on the position coordinates $x_i\in\R^3$ and the spin coordinates $s_i\in\Z_2$ of all the electrons.
An applied electric field can be described by a function $E \colon \R\to\R^3$, with $E(t)$ denoting the electric field vector at time $t$.
(Here quantum fluctuations of the field as well as its spatial dependence are neglected, as discussed above.)
The overall evolution equation is
\begin{equation}\label{eq:atomevol}
  i \partial_t \psi(t) = \left(-\frac12\Delta + V(x_1,\dotsc,x_N) - E(t)\cdot D(x_1,\dotsc,x_N) \right)\psi(t),
\end{equation}
where $\Delta$ is the Laplacian on $\R^{3N}$, the many-body Coulomb potential $V$ is given by $V(x_1,\dotsc,x_N) = - \sum_{i=1}^N Z / |x_i| + \sum_{1\le i<j\le N} 1 / |x_i-x_j|$, and $D$ is the dipole operator $D(x_1,\dotsc,x_N) = \sum_{i=1}^N  e \, x_i$.
This has the form (\ref{eq:evolution}), as is immediate by denoting the components of $E(t)$ and $x_i$ with respect to some orthonormal basis of $\R^3$ by $E_\ell(t)$ and $x_{i\ell}$ ($\ell=1,2,3$), and letting $H_\ell =\sum_{i=1}^N x_{i\ell}$.
Because the high-dimensional Schr\"odinger equation~\eqref{eq:atomevol} cannot be simulated in practice, the infinite-dimensional state equation~\eqref{eq:atomevol} is often replaced by projecting onto finitely many eigenstates $\psi_1,\dotsc,\psi_d$ of $H_0 = -\frac{1}{2}\Delta + V$, i.e.\ $\psi(t) \approx a_1(t)\psi_1+\dotsb+a_d(t)\psi_d$ and neglecting the coupling with the rest of the system.
Assuming for simplicity that the field is unidirectional, i.e.\ $E(t)=v(t)E_0$ for some unit vector $E_0$ and a scalar amplitude $v$, and denoting the eigenvalues of $H_0$ corresponding to the states $\psi_1,\dotsc,\psi_d$ by $E_1,\dotsc,E_d$, this yields
\begin{equation}\label{eq:evolred2}
  i\partial_t \begin{pmatrix} a_1 \\ a_2 \\ \vdots \\ a_d \end{pmatrix} =
  \left( \begin{pmatrix} E_1 & 0 & \cdots & 0 \\
                   0  & E_2 & \cdots  & 0 \\
                   \vdots &  & \vdots \\
                   0 & \cdots & & E_d \end{pmatrix} + v(t)
  \begin{pmatrix} 0 & \mu_{12} & \cdots & \mu_{1d} \\
                   \mu_{12}^*  &  0 & \cdots  & \mu_{2d} \\
                   \vdots & \vdots & \vdots \\
                   \mu_{1d}^* & \mu_{2d}^* & \cdots & 0 \end{pmatrix} \right)
  \begin{pmatrix} a_1 \\ a_2 \\ \vdots \\ a_d \end{pmatrix},
\end{equation}
with the coupling matrix elements
\begin{equation}\label{eq:dip}
  \mu_{mn} = \langle \psi_m, -E_0\cdot D \, \psi_n\rangle.
\end{equation}
We remark that the off-diagonal element $\mu_{mn}$ vanish whenever $\psi_m$ and $\psi_n$ have the same parity. This is a simple example of a forbidden transition.
\end{example}
\begin{example}\label{ex:mol2}
(Laser-guided chemical reactions)
Our last model is of central interest in photochemistry, but to our knowledge has not hitherto been considered at all in the mathematical literature.
Consider a molecule with $M$ nuclei at positions $R = (R_1,\dotsc,R_M) \in \R^{3M}$, and $N$ electrons at positions $x_1,\dotsc,x_N$ with spins $s_1,\dotsc,s_N$.
The state of the molecule at time $t$ is described by a wave function $\Psi \in L^2(\R^{3M} \times {(\R^3 \times \Z_2)}^N)$, i.e.\ $\Psi = \Psi(R, x_1, s_1,\dotsc, x_N, s_N)$.
The evolution equation is given by
\[
i \partial_t \Psi(t) = \left( \sum_{\alpha=1}^M -\frac{1}{2 m_\alpha} \Delta_{R_\alpha} + H_{e\ell}^{(R)} + E(t) \cdot D^{(R)} \right) \Psi(t),
\]
with electronic Hamiltonian $H_{e\ell}^{(R)} = -\frac12 \Delta + V^{(R)}(x_1,\dotsc,x_N)$,
potential $V^{(R)}(x_1,\dotsc,x_N) = - \sum_{i=1}^N \sum_{\alpha=1}^M Z_\alpha / |x_i-R_\alpha| + \sum_{1\le i<j\le N}    1 / |x_i-x_j| + \sum_{1\le\alpha<\beta\le M} Z_\alpha Z_\beta / |R_\alpha-R_\beta|$,
and dipole operator $D^{(R)}(x_1,\dotsc,x_N) = \sum_{i=1}^N e x_i - \sum_{\alpha=1}^M e Z_\alpha R_\alpha$.
Here, $e$ is the electronic charge ($-1$ in atomic units), $-eZ_\alpha$ are the nuclear charges ($+Z_\alpha$ in atomic units), and $m_\alpha$ are the nuclear masses.
A careful mathematical account in the absence of control fields and for smooth interaction potentials is given in~\cite{T:2003:SV}.

\begin{figure}
  \centering
  \includegraphics[width=0.5\textwidth]{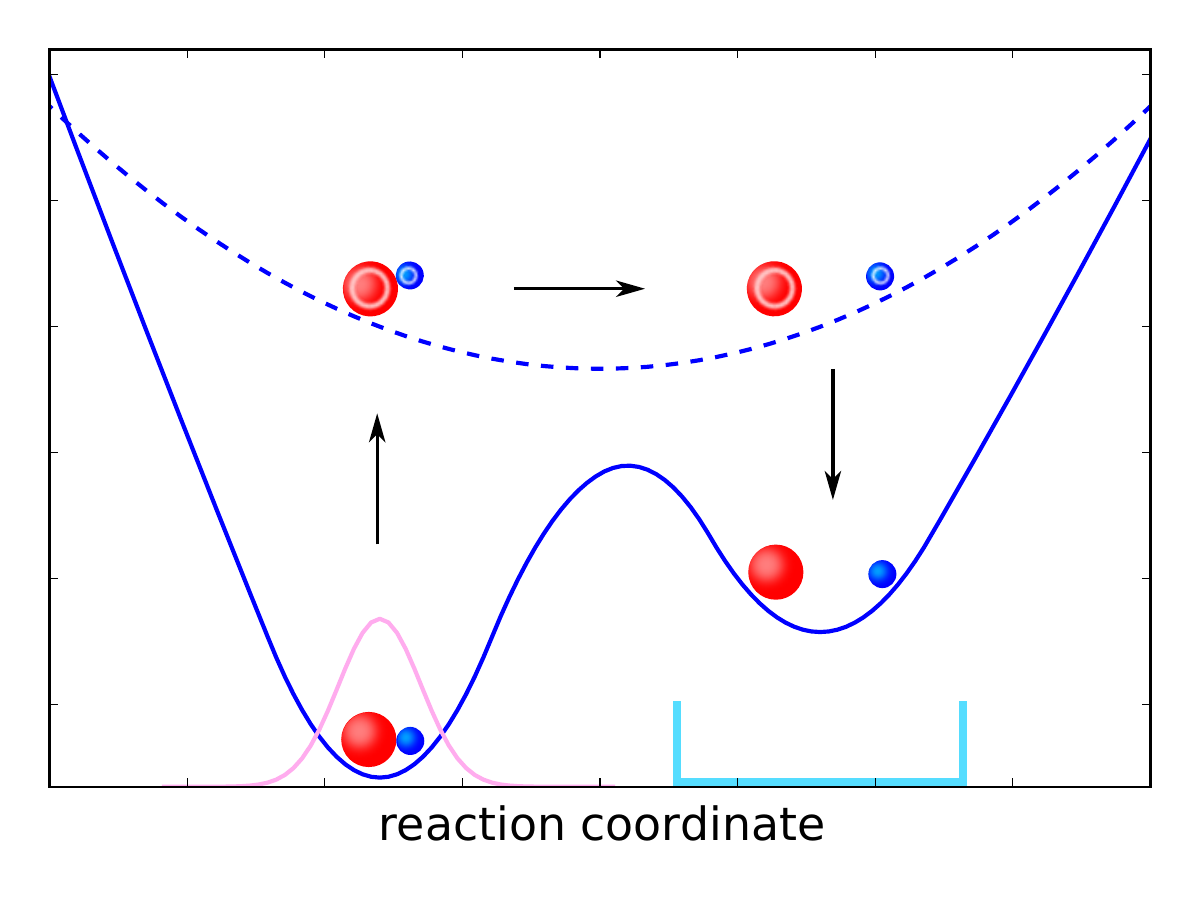}
  \caption{Schematic representation of laser-controlled chemical reaction dynamics. The nuclei of a molecule move on different potential energy surfaces depending on the electronic state, and the laser induces transitions between these states. Blue: Potential energy surfaces. Magenta: Initial wave function of the nuclei. Cyan: Target region.}\label{fig:multiple_pes}
\end{figure}
The typical situation in laser control of chemical reactions is the following, see Figure~\ref{fig:multiple_pes}.
The system starts in a stationary state of electrons and nuclei.
The laser then induces a transition to a different electronic state.
As a result the nuclei now see a different potential energy surface with respect to which they are no longer in equilibrium;
for instance the new surface may no longer contain a barrier to a desired target position.
Once the nuclei have moved barrier-free to the target position, the laser induces a transition back to the original surface so as to also put the electrons in the target state.
Mathematically, this situation can be modelled by generalizing the ansatz for electronic state in Example~\ref{ex:atom} to an ansatz of the joint wave function of electrons and nuclei.
Confining ourselves for simplicity to two electronic states, one assumes
\begin{multline*}
  \Psi(R,x_1,s_1,\dotsc,x_N,s_N, t) \approx \Phi_1(R,t)\psi_1^{(R)}(x_1,s_1,\dotsc,x_N,s_N)\\ + \Phi_2(R,t)\psi_2^{(R)}(x_1,s_1,\dotsc,x_N,s_N),
\end{multline*}
where $\Phi_1$, $\Phi_2\in L^2(\R^{3M})$ are nuclear wave functions, $||\Phi_1||^2+||\Phi_2||^2=1$, and $\psi_1^{(R)}$, $\psi_2^{(R)}$ are normalized eigenstates of the electronic Hamiltonian $H_{e\ell}^{(R)}$.
This leads to the following Schr\"odinger equation in the Hilbert space $\mc H=L^2(\R^{3M};\C^2)$
\begin{multline}\label{eq:twolevel}
    i\partial_t
    \begin{pmatrix}\Phi_1 \\ \Phi_2 \end{pmatrix}(t) =
    \left( \sum_{\alpha=1}^M -\frac{1}{2m_\alpha}\Delta + \begin{pmatrix} E_1(R) & 0 \\ 0 & E_2(R) \end{pmatrix}\right.\\ +
         \left. \vphantom{\sum_{\alpha=1}^M} \begin{pmatrix} E(t)\cdot \mu_{11}(R) & E(t)\cdot \mu_{12}(R) \\
         E(t)\cdot{\mu_{12}(R)}^* & E(t)\cdot\mu_{22}(R) \end{pmatrix}\right)
     \begin{pmatrix}\Phi_1 \\ \Phi_2 \end{pmatrix},
\end{multline}
with the dipole moment functions
\begin{equation}\label{eq:diptwolevel}
    \mu_{ij}(R) = \left\langle \psi_i^{(R)}, - D^{(R)} \psi_j^{(R)}\right\rangle_{\mc H_{e\ell}}.
\end{equation}
\end{example}
\section{Cost functionals and functional analytic setting}\label{sec:control}
To identify control fields $v_\ell \colon [0,T] \to \R$ which achieve a suitable goal, such as transfer of the system from one eigenstate of $H_0$ to another, and which are convenient to implement experimentally, one typically minimizes a cost functional which promotes goal achievement and penalizes unsuitable controls~\cite{HTKSVR:2012:PCCP}.
The theoretical work up to now has focused on cost functionals like the $L^2$ norm of the control field (or variants thereof like the $L^1$ or $H^1$ norm).
This approach has been very successful in achieving a wide spectrum of control goals.
But it has been remarked that ``[the] resulting optimized electric fields are often very complex, thus it is nearly impossible to understand the underlying processes involved in the observed control''~\cite{HTKSVR:2012:PCCP}.
We may therefore ask the question: \emph{Why $L^2$} (or variants thereof)?

We will argue that using certain more sophisticated costs leads to ``simpler'' controls. The control fields are \emph{sparse in frequency}, picking out the system's Bohr frequencies without any special ansatz, while at the same time having \emph{slowly varying amplitude envelopes}, thereby sharing an important feature of laser pulses designed by experimentalists which are commonly and successfully used in the laboratory.

We first state the general class of costs we propose, including its functional analytic setting.
Subsequently we give examples.
The simplest example of such a cost which yields frequency-sparse controls, and which motivated the general setting, is described in Example~\ref{ex:bloch_h1} below.

\subsection{General setting}
For the quantum system
\begin{equation}\label{eq:state}
i\partial_t \psi(t) = \big(H_0 + \sum_{l=1}^L {Bu(t)}_l H_l\big)\psi(t), \quad \psi(0) = \psi_0
\end{equation}
we consider the optimal control problem
\begin{equation}\label{eq:opt_prob}
\min_{\psi,\, u} J(\psi, u) \quad \text{subject to \eqref{eq:state}}
\end{equation}
where
\begin{equation}\label{eq:cost}
J(\psi, u) = \frac{1}{2} \la \psi(T), \mc O \psi(T) \ra_\mc H + \alpha \lVert u \rVert_{\mc M(\Omega;\,\mc U)}.
\end{equation}
The functional $J$ consists of the term $\frac{1}{2} \la \psi(T), \mc O \psi(T) \ra_\mc H$ which describes the expectation value that needs to be minimized, and a cost term of the form $\alpha \lVert u \rVert_{\mc M(\Omega;\,\mc U)}$ which contains a measure norm explained below and forces the solution to be sparse in a suitable sense.

We make the following very general functional-analytic assumptions on the operators and fields appearing in~\eqref{eq:state}--\eqref{eq:opt_prob}.
These assumptions cover all the physical examples from Section~\ref{sec:models}.

\textbf{ 1. Dynamics.}
Assume that the Hamiltonian $H_0$ is a self-adjoint operator on a Hilbert space $\mc H$ with domain $\mc D(H_0)$.
The initial condition $\psi_0$ may be any element of $\mc H$.
The coupling operators $H_\ell$ are assumed to be bounded self-adjoint operators on $\mc H$.
We use the vector operator notation
$v \cdot \tilde H = \sum_{l=1}^L v_l (-iH_l)$,
$v \cdot \tilde H^* = \sum_{l=1}^L v_l {(-iH_l)}^*$,
$\la \chi_1, \tilde H \chi_2 \ra_\mc H = {(\la \chi_1, -iH_l \chi_2 \ra_\mc H)}_{l=1}^L \in \R^L$
for $\tilde H = (-iH_1,\dotsc,-iH_L)$, $v \in \R^L$ and $\chi_1, \chi_2 \in \mc H$.

For the admissible class of controls $u$ and control operators $B_\ell$ specified below, we will show that eq.~\eqref{eq:state} possesses a unique mild solution in the state space of continuous paths in the Hilbert space, $C([0, T]; \mc H)$.

\textbf{ 2. Target constraint.}
The observable $\mc O$ specifying the target constraint can be any bounded self-adjoint operator on $\mc H$.
Typically $\mc O$ is the orthogonal projection onto the subspace we want to reach.
If $\mc O$ is a projection, the target constraint contribution $\frac{1}{2} \la \psi(T), \mc O \psi(T) \ra_\mc H$ to the cost lies in the interval
$[0, 1/2]$.
The value $0$ corresponds to a $100\%$ achievement of the control objective, and the value $0.5$ to a $0\%$ achievement.

\textbf{ 3. Control space, cost, measure norm.}
The control field $u$ is assumed to belong to a measure space of form
\begin{equation}\label{measurespace}
  \mc M(\Omega; \mc U),
\end{equation}
where $\Omega$ is a locally compact space (typically a closed interval of admissible frequencies), and $\mc U$ is a separable Hilbert space of time-dependent functions (admitting general separable Hilbert spaces can be useful to obtain nice optimality conditions, see the discussion after Example~\ref{ex:gabor_l2}).
The space~\eqref{measurespace} is the space of Borel measures $u$ on $\Omega$ with values in $\mc U$ of finite mass norm $||u||_{\mc M}$.
The mass norm of the measure $u$ is the second term in the cost functional $J$ in~\eqref{eq:cost}.

The space~\eqref{measurespace} is the dual of the separable space $C_0(\Omega; \mc U)$ of continuous functions on $\Omega$ with values in $\mc U$ which can be uniformly approximated by functions with compact support.
The duality pairing is given by
\begin{equation}
\la u, \varphi \ra_{\mc M, \mc C_0} = \int_\Omega \la u'(\omega), \varphi(\omega) \ra_\mc U \diff{\lvert u \rvert(\omega)}
\end{equation}
(see~\cite{M:2009:AMUC}) where $u'$ is the Radon--Nikodym derivative of $u$ with respect to the total variation measure $\lvert u \rvert$ (see~\cite[VII Thm.~4.1]{L:1993:SV}).
Note that the inner product in the integral is the Hilbert space inner product in $\mc U$.
This duality will be very useful.

\textbf{ 4. Control operator.} The control operator is assumed to be a bounded linear operator
\[
B \colon \mc M(\Omega; \mc U) \to L^p(0, T; \R^L) \quad \text{for some $1 < p \leq \infty$}.
\]
Moreover we assume that $B$ has a bounded linear predual operator $B^*\colon L^q(0, T; \R^L) \to C_0(\Omega; \mc U)$, by which we mean a bounded linear operator such that $\la B^* f, u \ra_{C_0, \mc M} = \la f, Bu \ra_{L^q, L^p}$ for all $f \in L^q(0, T; \R^L)$ and all $u \in \mc M(\Omega; \mc U)$ where $\frac{1}{p} + \frac{1}{q} = 1$.

Existence of a bounded linear predual operators implies the weak-$*$--weak{(-$*$)} continuity of $B$.
That is, weak-$*$ convergence of $u_n$ to $u$ in $\mc M(\Omega; \mc U)$ implies weak convergence of $Bu_n$ to $Bu$ in $L^p(0, T; \R^L)$ if $1 < p < \infty$, and weak-$*$ convergence if $p = \infty$.
The case $p=1$ has to be excluded in our framework since we will make use of the reflexivity of $L^p(0, T; \R^L)$.

Note that the operator $B^*$ depends on the Hilbert space structure of $\mc U$, see examples below.
Since $B^*$ appears in the optimality system, the freedom to choose $\mc U$ can be used to generate nice optimal controls.

All spaces --- possibly containing complex valued objects --- are equipped with a real Banach or Hilbert space structure.
For complex spaces this amounts to always using the real part of the scalar product, i.e.\ we take $\la \varphi, \psi \ra_\mc H$ to mean $\operatorname{Re}\la \varphi, \psi \ra_\mc H$.
In particular, linear always means $\R$-linear and the scalar product is real-valued and $\R$-bilinear.

%
%
%
\subsection{Examples}
We now list some examples for choosing the frequency domain $\Omega$, the Hilbert space $\mc U$ of time-dependent functions, and the control operator $B$.
The first example is the prototype for generating sparse controls.
It motivated the general functional analytic setting proposed above, and naturally incorporates physically relevant controls containing finitely many pulses of particular frequencies~\cite{BTS:1998:RMP,ABCLDKA:2002:M3AS,TLR:2004:PRE,SSBK:2010:JCP,RSDTB:2011:PCCP}.
\setcounter{example}{0}
\begin{example}\label{ex:bloch_h1}
(Two-scale synthesis, smooth functions of time)
Here $u$ will be a two-scale time-frequency representation of the laser field amplitude and $B$ will generate the corresponding field.
The control $u$ can a priori contain a continuum of active frequencies, each with its own smooth envelope.
This can be modeled mathematically as follows.
Let $\Omega$ be a closed subset of $\R^+$, $\mc U = H^1_0(0, T; \C)$, and $p = \infty$.
For $u \in L^1(\Omega; H^1_0(0,T;\C))$ we define $B$ to be the two-scale synthesis operator
\begin{equation}\label{eq:B_twoscale_nomeasure}
(Bu)(t) = \operatorname{Re}\int_\Omega u(\omega, t) e^{i\omega t} \diff\omega.
\end{equation}
By approximation, the expression can be extended to measure-valued controls.
This extension is important in practice, because it allows sharp concentration on a small number of frequencies, and has the following mathematically rigorous integral representation:
\begin{equation}\label{eq:B_twoscale}
  (Bu)(t) = \operatorname{Re}\int_\Omega u'(\omega, t) e^{i\omega t} \diff{\lvert u \rvert(\omega)},
\end{equation}
where $u'$ is the Radon--Nikodym derivative of $u$ with respect to $\lvert u \rvert$.

We remark that this setting naturally contains the physically motivated finite-dimensional ansatz spaces of~\cite{ABCLDKA:2002:M3AS,TLR:2004:PRE,SSBK:2010:JCP,KHK:2010:JOTA} in which a finite number of distinct frequencies can be switched on or off by few-parameter modulation functions:
the field $v(t) = \sum_{j=1}^n v_j(t) \cos(\omega_j t + \phi_j)$
corresponds to $Bu$ with $B$ as in~\eqref{eq:B_twoscale} and $u(\omega, t) = \sum_{j=1}^n \delta(\omega - \omega_j) e^{i\phi_j}v_j(t)$. We allow $\mc U$ to contain complex-valued functions.
This allows phase shifts in the different frequencies without leaving the linear setting.

The predual operator $B^*$ is the solution operator of the second-order boundary value problem
\begin{equation} \label{ODE}
  \frac{\partial^2}{\partial t^2}u(\omega, t) = f(t) e^{-i\omega t}, \quad u\Big|_{t=0} = u\Big|_{t=T}=0,
\end{equation}
i.e.~$B^*f = u$.
The equations~\eqref{ODE} are not coupled for different $\omega$.
The operator $B^*$ is continuous and has the additional regularity $B^*f \in C_0(\Omega; H^2 \cap H^1_0)$.
Here it is important that $\Omega$ is closed.
Otherwise $B$ might not be weak-$*$--weak{(-$*$)} continuous.
\end{example}
\begin{example}\label{ex:gabor_l2}
(Gabor synthesis, $L^2$ functions of time)
In this example we design the control operator $B$ and the Hilbert space $\mc U$ so that the predual operator $B^*$ has a nice form.
We take $\Omega \subset \R^+$, $\mc U = L^2(0, T; \C)$, and define $B$ by
\begin{equation}\label{eq:gabor_operator}
(Bu)(t) = \operatorname{Re} \int_\Omega \int_0^T k(s, t) u'(\omega, s) \diff s\; e^{i\omega t} \diff{\lvert u \rvert(\omega)},
\end{equation}
where $k\colon {[0, T]}^2 \to \R$ is a smooth symmetric kernel.
Roughly, this operator corresponds to a \emph{pre-processing} of the envelopes, with only smoothed envelopes entering the equation.
The predual operator $B^*$ becomes the \emph{Gabor transformation}
\[
(B^*f)(\omega, t) = \int_0^T f(s) k(t, s) e^{-i\omega s} \diff s,
\]
which is a time-frequency representation of the control field.
\end{example}
Example~\ref{ex:bloch_h1} above with $\mc U = H^1_0$ leads to a time-frequency representation $u=B^*f$ which is global in time.
That is, it has a global window equal to the Green's function $G$ of the one-dimensional boundary value problem~\eqref{ODE}, hence $(B^*f)(\omega, t) = \int_0^T f(s) G(t, s) e^{-i \omega s} \diff s$.
On the other hand, Example~\ref{ex:gabor_l2} leads to a time-frequency representation which is local in time, but the definition of $B$ is somewhat complicated.
Note that Example~\ref{ex:gabor_l2} can be reformulated in terms of the control operator from Example~\ref{ex:bloch_h1} by using a different control space.
To see this, let $K\colon L^2(0, T) \to L^2(0, T)$ be the compact operator given by convolution with a Gaussian kernel $k$, $(Kf)(t) = \int_0^T k(t, s) f(s) \diff s$.
Then $K$ is injective and self-adjoint and has an unbounded inverse $A\colon \mc D(A) \to L^2$.
Define $\mc U_k := \mc D(A)$ with the induced scalar product $\la u, v\ra_\mc U = \la A u, A v \ra_{L^2}$.
Then $\mc U_k$ is a Hilbert space and the predual operator $B^*$ of $B$ is $(B^*f)(\omega,t) = \Bigl( K^2(f e^{-i\omega \cdot}) \Bigr)(s)$.
This construction also works for window functions other than a Gaussian.
The equivalence to the choice $\mc U = L^2(0, T; \C)$, with $B$ as in~\eqref{eq:gabor_operator} follows since the dual of the map $X\colon C_0(\Omega; L^2(0, T; \C)) \to C_0(\Omega; \mc U_k)$ defined by $(X\varphi)(\omega) = K\varphi(\omega)$ is an isometric isomorphism between the control spaces that preserves the image under the corresponding control operators.

Our next example shows that our setting also covers interesting cases when $\mc U$ does not contain time-dependent functions.
\begin{example}\label{ex:fourier}
(Fourier synthesis, constant functions of time)
Let $\Omega \subset \R^+$, $\mc U = \C$, and let $B$ be the Fourier synthesis operator, that is to say
\[
(Bu)(t) = \operatorname{Re} \int_\Omega u(\omega) e^{i\omega t}d\omega \Bigl(= \operatorname{Re} \int_\Omega u'(\omega) e^{i \omega t} \diff{\lvert u \rvert(\omega)}\Bigr).
\]
The function $u(\omega)$ here can be viewed as a time-frequency representation $u(\omega, t)$ which is independent of time $t$.
The predual operator is, up to a constant factor, the Fourier transform of functions restricted to $[0,T]$,
\[
(B^*f)(\omega) = \hat f(\omega) = \int_0^T f(t) e^{-i\omega t} \diff t.
\]

An alternative approach to achieving sparsity of a time-global frequency decomposition via an $L^2$ cost combined with iterative `frequency sifting' is given in~\cite{RBKMAHR:2006:JCP}.
\end{example}
\begin{example}\label{ex:gabor}
(Time-frequency Gabor synthesis)
Let $\Omega \subset \R^+ \times [0, T]$ be a subset of time-frequency space,
$\mc U = \C$, and
\[
(Bu)(t) = \operatorname{Re} \int_{\Omega} u'(\omega, s) g_{\omega, s}(t) \diff{\lvert u \rvert(\omega, t)}
\]
for the ansatz function $g_{\omega, s}(t) =  e^{-\frac{(t - s)^2}{2 \sigma^2}}e^{i \omega (t - s)}$ for some $\sigma > 0$.
This defines a suitable extension of the control operator from Example~\ref{ex:gabor_l2} to measures in both time and frequency.
With this control operator, each Dirac measure $u=\delta_{\omega,t}$ corresponds to a Gaussian wave packet centered at time $t$ with frequency $\omega$.
The predual of the control operator is given by
\[
(B^*f)(\omega,t) = \int_0^T \overline{g_{\omega,t}(s)} \, f(s) \diff s.
\]
\end{example}
These examples by no means exhaust our framework, but are meant to give an idea of its flexibility.
We also remark that frequency constraints could easily be included by restricting $\Omega$, compare~\cite{LSTT:2009:PR}.
\section{Theory of the optimal control problem}\label{sec:theory}
In this section we will study the optimal control problem \eqref{eq:opt_prob}.
We first show well-posedness of the problem.
In contrast to~\cite{KPV:2014:SJCO} the main difficulty does not lie in the low regularity of the control since we assume sufficient smoothing of the control operator.
It rather lies in the bilinearity of the state equation together with the low regularity of the state.
Subsequently we derive necessary optimality conditions.
We shall show that the support of the optimal measure can be influenced.
Theorem~\ref{thm:supp_characterization} is the natural analog of Theorem~2.12 in~\cite{KPV:2014:SJCO}.
Differences arise due to the bilinearity of the equation and the non-trivial control operator.
We shall also note interesting relationships between the choices for $B$ and $\mc U$.
Throughout this section we will stay in the setting of mild solutions.
This suggests to develop a derivation of the necessary optimality conditions which only requires integral manipulations and no further regularity discussion for the primal and dual state are necessary.

\subsection{Existence and compactness properties for the state equation}
Here we recall well-known properties of the state equation~\eqref{eq:state} needed to prove the existence of solutions to the optimal control problem.
Throughout this subsection, for a given control field $v \in L^1(0, T; \R^L)$ we  consider mild solutions to the state equation~\eqref{eq:state} i.e.\ functions $\psi$ which satisfy $\psi \in C([0, T]; \mc H)$ and
\begin{equation}\label{eq:state_mild}
\psi(t) = G(t) \psi_0 + \int_0^t G(t - s) v(s) \cdot \tilde H \psi(s) \diff s.
\end{equation}
Here $G$ is the unitary group generated by the skew-adjoint operator $-iH_0$ and $\tilde H = (-i H_l)_l$.
Existence of mild solutions as well as their differentiability in the direction of the field follow from~\cite[Thm.\ 2.5]{BMS:1982:SJCO}, and a convenient expression for the derivative is given in~\cite[Chapter 2]{LY:1995:BB}.
The existence of mild solutions and their differentiability in the direction of the field are classical results \cite[Theorem 2.5]{BMS:1982:SJCO}.
\begin{proposition}\label{pro:state_existence}
Let $v \in L^1(0, T; \R^L)$. Then~\eqref{eq:state_mild} possesses a unique solution $\psi \in C([0, T]; \mc H)$. Furthermore, the mapping $F\colon L^1(0, T; \R^L) \to C([0, T]; \mc H)$ defined by $F(v) = \psi$ is differentiable. The derivative is given by $F'(v)(\delta v) = \psi'$ where
\begin{equation}\label{eq:state'_mild}
\psi'(t) = \int_0^t \mc G(t, s) \delta v(s) \cdot \tilde H \psi(s) \diff s
\end{equation}
with the evolution operator $\mc G$ defined through $\mc G(t, s) \psi(s) = \psi(t)$ for $t \geq s$.
\end{proposition}
We remark that the evolution operators $\mc G(t, s)$ are unitary.
This can easily be shown by working in the rotating frame, i.e. by studying the function $t \mapsto G^*(t)\psi(t)$ \cite{RS:1975:AP}, but it will not be needed in the following.

One downside of the mild solution framework is that the typical compactness arguments based on additional spatial regularity \cite{IK:2007:SJCO,HMMS:2013:SJCO} cannot be used.
Fortunately there is a powerful replacement that works in a very general setting.
We will use the following compactness result from \cite[Theorem 3.6]{BMS:1982:SJCO}.
\begin{proposition}\label{pro:compactness}
Let $(v_n)_n$ be a sequence in $L^1(0, T; \R^L)$ such that $v_n \rightharpoonup v$.
Then the corresponding solutions $\psi_n$ of \eqref{eq:state_mild} satisfy $\psi_n \rightarrow \psi$ in $C([0, T]; \mc H)$, where $\psi$ is the mild solution corresponding to $v$.
\end{proposition}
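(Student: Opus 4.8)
The plan is to exploit the explicit mild formulation \eqref{eq:state_mild}, the weak(-$*$) convergence $v_n \rightharpoonup v$, and the norm preservation from Lemma~\ref{lem:norm_pres}. First I would recall that by Proposition~\ref{pro:exist_eq} each $\psi_n$ is the unique element of $C([0,T];\mc H)$ satisfying
\[
  \psi_n(t) = G(t)\psi_0 + \int_0^t G(t-s)\, v_n(s)\cdot\tilde H\,\psi_n(s)\diff s,
\]
and likewise for $\psi$. Subtracting and inserting $\pm\int_0^t G(t-s) v_n(s)\cdot\tilde H\,\psi(s)\diff s$, one gets the identity
\[
  \psi_n(t)-\psi(t) = \underbrace{\int_0^t G(t-s)\,(v_n(s)-v(s))\cdot\tilde H\,\psi(s)\diff s}_{=:r_n(t)}
     + \int_0^t G(t-s)\,v_n(s)\cdot\tilde H\,\bigl(\psi_n(s)-\psi(s)\bigr)\diff s.
\]
Since $\lVert H_\ell\rVert<\infty$ and $\lVert\psi_n(s)\rVert=1$ (Lemma~\ref{lem:norm_pres}, so also $\sup_n\lVert v_n\rVert_{L^1}<\infty$ by weak convergence), Gronwall's inequality applied to $s\mapsto \lVert\psi_n(s)-\psi(s)\rVert$ gives
\[
  \lVert\psi_n-\psi\rVert_{C([0,T];\mc H)} \le \Bigl(\sup_{t}\lVert r_n(t)\rVert\Bigr)\exp\Bigl(\sum_\ell\lVert H_\ell\rVert\,\lVert v_n\rVert_{L^1}\Bigr).
\]
Hence it suffices to show $\sup_{t\in[0,T]}\lVert r_n(t)\rVert\to 0$.

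The key point — and the main obstacle — is that $v_n\rightharpoonup v$ only weakly in $L^1$ (or weak-$*$ in $L^\infty$), so the integral $r_n(t)$ does not converge by a naive argument: the integrand $s\mapsto G(t-s)\,\tilde H\,\psi(s)$ is merely continuous with values in $\mc H$, not in a reflexive or separable-dual predual matched to the weak topology at hand. The natural remedy is a density/equicontinuity argument. For \emph{fixed} $t$, the map $s\mapsto G(t-s)(-iH_\ell)\psi(s)$ lies in $C([0,t];\mc H)\subset L^\infty(0,T;\mc H)$; testing against $v_n-v$ componentwise, each component is of the form $\int_0^t \phi_\ell(s)\,(v_{n,\ell}(s)-v_\ell(s))\diff s$ with $\phi_\ell\in C([0,T];\mc H)$. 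For scalar test functions weak(-$*$) convergence gives this $\to 0$ in $\mc H$; one handles the $\mc H$-valued case by approximating $\phi_\ell$ uniformly by simple functions $\sum_k \chi_{E_k}(s)\,h_k$ with $h_k\in\mc H$, reducing to the scalar statement, with the approximation error controlled uniformly in $n$ by $\sup_n\lVert v_n\rVert_{L^1}$. This gives $r_n(t)\rightharpoonup 0$ in $\mc H$ for each fixed $t$ — in fact $r_n(t)\to 0$ strongly if one uses that $G$ is unitary and the integrand is norm-continuous, which one can also see by the same simple-function reduction since for simple integrands $\int \chi_{E}(s)(v_{n}-v)\to 0$ strongly in $\R^L$.

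It remains to upgrade pointwise convergence $r_n(t)\to 0$ to uniform convergence in $t$. For this I would show the family $\{r_n\}_n$ is equicontinuous in $C([0,T];\mc H)$: for $t'<t$,
\[
  r_n(t)-r_n(t') = \bigl(G(t-t')-I\bigr)\!\int_0^{t'}\!\! G(t'-s)(v_n-v)(s)\cdot\tilde H\psi(s)\diff s
      + \int_{t'}^{t}\!\! G(t-s)(v_n-v)(s)\cdot\tilde H\psi(s)\diff s,
\]
and both terms are small uniformly in $n$: the second is bounded by $\sum_\ell\lVert H_\ell\rVert\,\sup\lVert\psi\rVert\,\int_{t'}^t(\lvert v_{n,\ell}\rvert+\lvert v_\ell\rvert)$, which is small uniformly in $n$ by the Dunford–Pettis / uniform integrability that accompanies weak $L^1$-convergence (equivalently, trivial if the $v_n$ are bounded in $L^\infty$); the first term uses strong continuity of the group $G$ together with the uniform bound on the integral. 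Combined with pointwise convergence and the compactness of $[0,T]$, this yields $\sup_t\lVert r_n(t)\rVert\to 0$ by a standard Arzelà–Ascoli-type argument (a pointwise-convergent equicontinuous sequence on a compact set converges uniformly). Feeding this back into the Gronwall estimate above completes the proof. The only genuinely delicate step is the interplay between weak $L^1$-convergence and the $\mc H$-valued integrand; if instead one assumes $p<\infty$ in the control operator setting so that $v_n\rightharpoonup v$ in a reflexive $L^p$, this step is immediate, but the lemma as stated for $L^1$ requires the uniform-integrability input, which is exactly what makes the equicontinuity argument go through.
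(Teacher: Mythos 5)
Your proof is essentially correct, but it takes a genuinely different route from the paper. The paper never forms the difference $\psi_n-\psi$ directly: it first applies the scalar Arzel\`a--Ascoli theorem to the families $t\mapsto\la\chi,\psi_n(t)\ra$ to extract a pointwise weak limit along a subsequence, then identifies that limit as the mild solution for $v$ by passing to the limit in the tested mild equation (using a product-convergence lemma for a weakly convergent $L^1$ sequence against a pointwise convergent, uniformly bounded sequence), then upgrades weak to strong pointwise convergence via norm preservation, and finally obtains uniformity by a weak-equicontinuity-plus-contradiction argument. You instead subtract the two mild equations, absorb the term involving $\psi_n-\psi$ by Gronwall (legitimate, since $\sup_n\lVert v_n\rVert_{L^1}<\infty$ by uniform boundedness), and reduce everything to showing $\sup_t\lVert r_n(t)\rVert\to 0$ for the linear remainder $r_n$; the pointwise strong convergence $r_n(t)\to 0$ then follows from your simple-function approximation of the continuous $\mc H$-valued integrand, and uniformity from equicontinuity plus Dunford--Pettis. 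Your route is more direct and quantitative (it yields an explicit bound $\lVert\psi_n-\psi\rVert_{C}\leq C\sup_t\lVert r_n(t)\rVert$), avoids subsequence extraction and the limit-identification step entirely, and does not need the norm-preservation lemma in an essential way (you only use the trivial bound $\lVert\psi\rVert_\infty<\infty$); the paper's compactness route is what you would need if existence of the limit solution were not already known, but here Proposition~\ref{pro:exist_eq} supplies it, so both are valid.

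One point deserves sharpening: in your equicontinuity argument the term $(G(t-t')-I)r_n(t')$ is not controlled by ``strong continuity of $G$ together with the uniform bound on the integral'' --- strong continuity plus a mere norm bound does not give smallness uniformly in $n$, since $\lVert G(h)-I\rVert$ need not tend to $0$ in operator norm. The correct fix is available from what you already proved: since $r_n(t')\to 0$ strongly, given $\epsilon$ all but finitely many $n$ satisfy $\lVert(G(h)-I)r_n(t')\rVert\leq 2\lVert r_n(t')\rVert<\epsilon$ for every $h$, and for the remaining finitely many $n$ strong continuity supplies a $\delta$. This yields exactly the ``weak version of equicontinuity'' that the paper itself uses in its Step 3, and it suffices for the pointwise-plus-equicontinuous-implies-uniform conclusion on the compact interval $[0,T]$.
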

\begin{remark}
  The results of Proposition~\ref{pro:state_existence} and~\ref{pro:compactness} hold for general $C_0$-semi groups. The unitarity of the evolution operator is not used for the theoretical results in this work.
\end{remark}
\subsection{Existence of optimal controls}
We will now prove the existence of solutions to the optimal control problem \eqref{eq:opt_prob}.
\begin{theorem}
There exists a global solution $(\bar\psi, \bar u) \in C([0, T]; \mc H) \times \mc M(\Omega; \mc U)$ of \eqref{eq:opt_prob}.
\end{theorem}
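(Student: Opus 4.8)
The plan is to use the direct method of the calculus of variations. First I would argue that the infimum of $J$ over the admissible set is finite: it is bounded below by $0$ since $\mc O$ is, after possibly adjusting by a bounded perturbation, encoded so that the target term lies in $[0,1/2]$ (or at any rate $\la \psi(T),\mc O\psi(T)\ra$ is bounded below on the unit sphere because $\mc O$ is bounded and $\lVert\psi(T)\rVert=1$ by Lemma~\ref{lem:norm_pres}), and it is finite since $u=0$ is admissible. Pick a minimizing sequence $(\psi_n, u_n)$, where $\psi_n$ is the mild solution associated to the field $v_n := Bu_n$.

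Next I would extract a weak-$*$ convergent subsequence of the controls. Since $\alpha\lVert u_n\rVert_{\mc M}$ is part of the cost and the target term is bounded below, $\sup_n \lVert u_n\rVert_{\mc M} < \infty$. As $\mc M(\Omega;\mc U)$ is the dual of the separable space $C_0(\Omega;\mc U)$, the Banach--Alaoglu theorem gives a subsequence with $u_n \overset{*}{\rightharpoonup} \bar u$ in $\mc M(\Omega;\mc U)$, and lower semicontinuity of the mass norm under weak-$*$ convergence yields $\lVert\bar u\rVert_{\mc M}\le\liminf_n\lVert u_n\rVert_{\mc M}$. By assumption the control operator $B$ has a bounded linear predual, hence is weak-$*$--weak(-$*$) continuous; therefore $v_n = Bu_n \rightharpoonup \bar v := B\bar u$ in $L^p(0,T;\R^L)$ (or weak-$*$ if $p=\infty$). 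Since $p>1$, in all cases $v_n\rightharpoonup\bar v$ in $L^1(0,T;\R^L)$ (on the bounded time interval $L^p\hookrightarrow L^1$, and weak convergence in $L^p$ implies weak convergence in $L^1$; for $p=\infty$ the bounded-in-$L^\infty$, weak-$*$ convergent sequence is in particular weakly convergent in $L^1$). Now Proposition~\ref{pro:compactness} applies: the associated mild solutions satisfy $\psi_n\to\bar\psi$ strongly in $C([0,T];\mc H)$, where $\bar\psi$ is the mild solution for $\bar v = B\bar u$. In particular $(\bar\psi,\bar u)$ is admissible for \eqref{eq:opt_prob}.

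Finally I would pass to the limit in the cost. The target term is continuous: $\psi_n(T)\to\bar\psi(T)$ strongly in $\mc H$ and $\mc O$ is bounded, so $\la\psi_n(T),\mc O\psi_n(T)\ra \to \la\bar\psi(T),\mc O\bar\psi(T)\ra$. The cost term is weak-$*$ lower semicontinuous as noted. Hence
\[
  J(\bar\psi,\bar u) \le \liminf_{n\to\infty} J(\psi_n, u_n) = \inf_{\psi,u} J(\psi,u),
\]
so $(\bar\psi,\bar u)$ is a minimizer.

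The main obstacle is the interaction between the weak convergence of the controls and the bilinearity of the state equation: a priori, passing to the limit in the nonlinear term $v_n\cdot\tilde H\psi_n$ is delicate, but this is exactly the content of Proposition~\ref{pro:compactness}, whose proof exploits the product structure (time-dependent scalar $\times$ fixed bounded operator) together with strong convergence of the states obtained via norm preservation (Lemma~\ref{lem:norm_pres}) and an Arzel\`a--Ascoli argument. A secondary technical point to check carefully is that weak-$*$ convergence in $\mc M(\Omega;\mc U)$ does indeed produce (via the predual $B^*$) weak, respectively weak-$*$, convergence of $Bu_n$ in the correct $L^p$ space, and that this descends to weak convergence in $L^1(0,T;\R^L)$ so that Proposition~\ref{pro:compactness} is applicable; this is immediate from the boundedness of $[0,T]$ and the assumption $p>1$.
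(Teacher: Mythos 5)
Your proposal is correct and follows essentially the same route as the paper's proof: a minimizing sequence, weak-$*$ compactness of the bounded control sequence in $\mc M(\Omega;\mc U)$ as the dual of the separable space $C_0(\Omega;\mc U)$, the weak-$*$--weak(-$*$) continuity of $B$ via its predual to pass to weak $L^1$ convergence of the fields, Proposition~\ref{pro:compactness} for strong convergence of the states, and weak-$*$ lower semicontinuity of the measure norm. The only additions are your explicit justification that the infimum is finite and bounded below (via Lemma~\ref{lem:norm_pres}) and the slightly more careful handling of the $p=\infty$ case, both of which the paper leaves implicit.
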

\begin{proof}
The result follows from standard reasoning in the calculus of variations.
Let $(\psi_n, u_n)$ be a minimizing sequence,
\begin{equation}\label{eq:inf_seq}
  \lim_n J(\psi_n, u_n) = \inf J(\psi, u).
\end{equation}
Since $\mc O$ is bounded and $\alpha > 0$, the sequence $(u_n)$ is bounded in $\mc M(\Omega; \mc U)$.
Therefore, and because $\mc M(\Omega; \mc U) = C(\Omega; \mc U)^*$ is the dual of a separable Banach space, the sequence $(u_n)_n$ has a weak-$*$ convergent subsequence still denoted by $(u_n)_n$ with limit $\bar u \in \mc M(\Omega; \mc U)$.
The weak-$*$--weak(-$*$) continuity of $B$ implies that $B u_n$ converges to $B\bar u$ weakly in $L^p(0, T; \R^L)$ for some $p > 1$ and thus also for $p = 1$.
By Proposition~\ref{pro:compactness} the corresponding sequence of states $(\psi_n)_n$ satisfies $\psi_n(T) \rightarrow \bar\psi(T)$.
Thus the first summand of $J$ converges, $\la \psi_n(T), \mc O \psi_n(T) \ra \rightarrow \la \bar\psi(T), \mc O \bar\psi \ra$.
The second summand of $J$ is weak-$*$ lower semi-continuous as it is a norm in a dual space.
Thus we obtain $\lim_n J(\psi_n, u_n) \geq J(\bar\psi, \bar u)$.
Together with \eqref{eq:inf_seq} this implies the claim.
\end{proof}

\subsection{Necessary optimality conditions}
For theoretical and numerical purposes we will use the reduced form of \eqref{eq:opt_prob},
\begin{equation}\label{eq:red_opt_prob}
\min_{u} j(u), \quad j(u) = J(\psi(u), u).
\end{equation}
Here $\psi(u)$ denotes the solution of $\eqref{eq:state}$ for the control $u$.

The reduced cost functional can be split into two parts.
A nonlinear differentiable part
\begin{equation}\label{eq:f'}
f(v) = \la \psi(T), \mc O \psi(T) \ra_\mc H
\end{equation}
with $v = Bu$, and a nondifferentiable convex part
\[
g(u) = \alpha \lVert u \rVert_{\mc M}.
\]

In the next lemma we will see that the derivative of the differentiable part $f$ is given by
\[
f'(v) = \la \varphi, \tilde H \psi\ra_\mc H
\]
where we recall from Section~\ref{sec:control} that $\la \varphi, \psi \ra_\mc H$ denotes the real part of the inner product, and $\varphi$ is the mild solution of the dual equation
\[
\begin{aligned}
i\partial_t \varphi(t) &= (H_0 + v(t) \cdot i \tilde H)\varphi(t), \quad \varphi(T) = \mc O \psi(T).
\end{aligned}
\]
Using the evolution operator this can be rewritten as
\begin{equation}\label{eq:dual_mild}
\varphi(t) = \mc G(T, t)^* \mc O \psi(T).
\end{equation}
Under suitable assumptions, the representation~\eqref{eq:f'} of $f'$ can be derived using a Lagrange functional approach, see \cite{PDR:1988:PR,vWBV:2009:SJSC}. Since in our setting a variational formulation is not readily available, we will give a short proof in the present setting of mild solutions.
\begin{lemma}\label{lem:ev_diff}
Let $v, \delta v \in L^1(0, T; \R^L)$, and let $\psi$, $\psi'$ and $\varphi$ be the corresponding solutions of \eqref{eq:state_mild}, \eqref{eq:state'_mild} and \eqref{eq:dual_mild}, respectively.
Then the mapping $f\colon L^1(0, T; \R^L) \to \R$ defined by
\[
f(v) = \frac{1}{2} \la \psi(T), \mc O \psi(T) \ra_\mc H
\]
is continuously differentiable with derivative
\[
f'(v)(\delta v) = \int_0^T \delta v(t) \cdot \la \varphi(t), \tilde H \psi(t)\ra_\mc H \diff t.
\]
\end{lemma}
\begin{proof}
The continuous differentiability of the state and the product rule give continuous differentiability of $f$ and
\[
f'(v)(\delta v) = \la \mc O \psi(T), \psi'(T) \ra_\mc H.
\]
Using~\eqref{eq:state'_mild} and~\eqref{eq:dual_mild} we obtain
\begin{align*}
\la \mc O \psi(T), \psi'(T) \ra_\mc H &= \la \mc O \psi(T), \int_0^T \mc G(T, t) \delta v(t) \cdot \tilde H \psi(t) \diff t \ra_\mc H \\
&= \int_0^T \delta v(t) \cdot \la \mc G(T, t)^* \mc O \psi(T), \tilde H \psi(t) \ra_\mc H \diff t\\
&= \int_0^T \delta v(t) \cdot \la \varphi(t), \tilde H \psi(t) \ra_\mc H \diff t.
\end{align*}
\end{proof}
We can now derive the following optimality condition. It is partially inspired the optimality condition derived in~\cite[Thm. 2.11]{KPV:2014:SJCO} for a linear parabolic control problem of form $\partial_t \psi - \Delta \psi = u$.
\begin{proposition}[Optimality conditions]\label{pro:opt_cond}
Let $\bar u$ be a local minimizer of problem \eqref{eq:red_opt_prob}, and let $\bar\psi, \bar\varphi \in \mc C([0, T]; \mc H)$ be the corresponding solutions of~\eqref{eq:state_mild} and~\eqref{eq:dual_mild} for the control field $B \bar u$.
Then
\begin{equation}\label{eq:opt_contr}
\alpha \lVert \bar{u} \rVert_{\mc M} = -\la B^* \la \bar\varphi, \tilde H \bar\psi \ra_\mc H, \bar{u} \ra_{C_0, \mc M}.
\end{equation}
and
\begin{equation}\label{eq:opt_est}
\lVert B^* \la \bar\varphi, \tilde H \bar\psi \ra_\mc H \rVert_{C_0} \leq \alpha.
\end{equation}
\end{proposition}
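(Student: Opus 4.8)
The plan is to read \eqref{eq:red_opt_prob} as minimizing $j(u) = f(Bu) + g(u)$, where $f$ is the differentiable functional of Lemma~\ref{lem:ev_diff}, whose derivative $f'(v)$ is represented by the function $t \mapsto \la\varphi(t), \tilde H\psi(t)\ra_{\mc H}$, and $g(u) = \alpha\lVert u\rVert_{\mc M}$ is convex and positively homogeneous. First I would note that, since $\bar\psi, \bar\varphi \in C([0,T];\mc H)$ (the latter because $\bar\varphi(T) = \mc O\bar\psi(T)$ with $\mc O$ bounded, and Lemma~\ref{lem:norm_pres} gives $\lVert\bar\psi\rVert \equiv 1$) and the $H_\ell$ are bounded, the derivative $f'(B\bar u) = \la\bar\varphi, \tilde H\bar\psi\ra_{\mc H}$ lies in $C([0,T];\R^L) \subset L^q(0,T;\R^L)$ for every $1 \le q < \infty$; in particular it is in the domain of the predual $B^*$, so that
\[
\xi := B^*\la\bar\varphi, \tilde H\bar\psi\ra_{\mc H} \in C_0(\Omega;\mc U)
\]
is well defined.

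Next I would extract the first-order optimality inequality. For arbitrary $u \in \mc M(\Omega;\mc U)$ and $t \in (0,1]$, minimality of $\bar u$ gives $j(\bar u + t(u-\bar u)) \ge j(\bar u)$; dividing by $t$, using differentiability of $f \circ B$ together with the chain rule $(f\circ B)'(\bar u)(\delta u) = \la f'(B\bar u), B\delta u\ra_{L^q, L^p} = \la B^*f'(B\bar u), \delta u\ra_{C_0, \mc M}$, using convexity of $g$ in the form $g(\bar u + t(u-\bar u)) \le (1-t)g(\bar u) + tg(u)$, and finally letting $t \to 0^+$, I obtain
\[
\la\xi, u - \bar u\ra_{C_0, \mc M} + \alpha\lVert u\rVert_{\mc M} - \alpha\lVert\bar u\rVert_{\mc M} \ge 0 \quad\text{for all } u \in \mc M(\Omega;\mc U).
\]
Equivalently, $\bar u$ minimizes the convex functional $u \mapsto \la\xi, u\ra_{C_0, \mc M} + \alpha\lVert u\rVert_{\mc M}$, which takes the value $0$ at $u = 0$.

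To prove \eqref{eq:opt_est} I argue by contradiction. If $\lVert\xi\rVert_{C_0} > \alpha$ then, since $\xi$ vanishes at infinity, there is $\omega_0 \in \Omega$ with $\lVert\xi(\omega_0)\rVert_{\mc U} > \alpha$, and in particular $\xi(\omega_0) \ne 0$. For $\lambda > 0$ consider the $\mc U$-valued Dirac measure $u_\lambda = -\lambda\lVert\xi(\omega_0)\rVert_{\mc U}^{-1}\,\xi(\omega_0)\,\delta_{\omega_0}$; then $\lVert u_\lambda\rVert_{\mc M} = \lambda$ and, by the integral representation \eqref{eq:dual_pairing} of the pairing, $\la\xi, u_\lambda\ra_{C_0, \mc M} = -\lambda\lVert\xi(\omega_0)\rVert_{\mc U}$, so the objective equals $\lambda(\alpha - \lVert\xi(\omega_0)\rVert_{\mc U}) \to -\infty$ as $\lambda \to \infty$, contradicting that $\bar u$ minimizes it. Hence $\lVert\xi\rVert_{C_0} \le \alpha$, which is \eqref{eq:opt_est}. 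Finally, with this bound in hand, $\la\xi, u\ra_{C_0, \mc M} + \alpha\lVert u\rVert_{\mc M} \ge (\alpha - \lVert\xi\rVert_{C_0})\lVert u\rVert_{\mc M} \ge 0$ for every $u$, with equality at $u = 0$; since $\bar u$ is a minimizer, the minimal value is $0$, so $\la\xi, \bar u\ra_{C_0, \mc M} + \alpha\lVert\bar u\rVert_{\mc M} = 0$, which rearranges to \eqref{eq:opt_contr}.

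The main obstacle, beyond the routine first-order calculus, is the functional-analytic bookkeeping: one must check that $f'(B\bar u)$ indeed lands in $L^q(0,T;\R^L)$ so that $B^*$ may legitimately be applied and the chain rule used even in the borderline case $p = \infty$ (where only $L^1$, not the full dual of $L^\infty$, is available), and one must handle the extremal $\mc U$-valued Dirac construction correctly against the $C_0(\Omega;\mc U)$--$\mc M(\Omega;\mc U)$ duality, in particular that $\lVert\xi\rVert_{C_0} = \sup_\omega \lVert\xi(\omega)\rVert_{\mc U}$ is (nearly) attained and nonzero at the chosen $\omega_0$, both of which follow from $\xi \in C_0$ and $\alpha > 0$.
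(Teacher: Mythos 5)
Your proposal is correct and follows essentially the same route as the paper: the decomposition $j=f\circ B+g$, the first-order variational inequality obtained from convexity of $g$ and differentiability of $f$, the transfer through the predual $B^*$, and Dirac-measure test functions aligned with $-\xi(\omega_0)$. The only difference is cosmetic — you establish \eqref{eq:opt_est} first (via an unboundedness contradiction) and deduce \eqref{eq:opt_contr} from the resulting nonnegativity, whereas the paper gets \eqref{eq:opt_contr} first by testing with $u=0$ and $u=2\bar u$ and then derives \eqref{eq:opt_est}.
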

\begin{proof}
Since we can split our functional into a sum of a nonconvex and a nonsmooth part, the result can be deduced from the general differential calculus of Clarke \cite{C:1990:SIAM}.
Because this calculus is somewhat intricate and due to the importance of the optimality system, we prefer to give a more elementary proof.

Let $\bar u$ be a local minimizer of problem \eqref{eq:red_opt_prob} and let $\bar\psi$ and $\bar\varphi$ be the corresponding solutions of \eqref{eq:state_mild} and \eqref{eq:dual_mild}.
We first show the variational inequality
\begin{equation} \label{eq:var_ineq}
g(\bar u) - f'(B\bar u)(Bu - B\bar u) \leq g(u).
\end{equation}
Since $\bar u$ is locally optimal, we have
\[
\frac{1}{h} \big(j(\bar u + h(u - \bar u)) - j(\bar u)\big) \geq 0
\]
for $u \in \mc M(\Omega; \mc U)$ and $h \in (0, 1)$ sufficiently small.
Using the decomposition $j = f \circ B + g$ and convexity of $g$, this implies
\[
\frac{1}{h}\big(f(B\bar u + h(Bu - B\bar u)) - f(B\bar u)\big) + g(u) - g(\bar u) \geq 0.
\]
Since $f$ is differentiable, taking the limit $h \to 0$ yields~\eqref{eq:var_ineq}.

Testing~\eqref{eq:var_ineq} with $u = 0$ and $u = 2\bar u$ gives
\begin{equation} \label{eq:opt_contr_abstract}
g(\bar u) + f'(B\bar u)(B\bar u) = 0.
\end{equation}
Substituting \eqref{eq:opt_contr_abstract} into~\eqref{eq:var_ineq} gives
\begin{equation} \label{eq:opt_ineq_abstract}
-f'(B\bar u)(Bu) \leq g(u)
\end{equation}
for all $u \in \mc M(\Omega; \mc U)$.

Using Lemma~\ref{lem:ev_diff} on the derivative of $f$, equation~\eqref{eq:opt_contr_abstract} gives
\[
g(\bar u) = - \la \la \bar\varphi, \tilde H \bar\psi\ra_\mc H, B \bar u \ra_{L^q, L^p} =  - \la B^* \la \bar\varphi, \tilde H \bar\psi\ra_\mc H, \bar u \ra_{C_0, \mc M}
\]
which proves \eqref{eq:opt_contr}.
From~\eqref{eq:opt_ineq_abstract} we obtain
\[
- \la B^* \la \bar\varphi, \tilde H \bar\psi\ra_\mc H, u \ra_{C_0, \mc M} \leq \alpha \lVert u \rVert_\mc M.
\]
Testing this inequality with $u = -\delta_\omega (B^*\la \bar\varphi, \tilde H \bar\psi\ra_\mc H)(\omega)$ for some $\omega \in \Omega$ yields
\[
\lVert (B^* \la \bar\varphi, \tilde H \bar\psi \ra_\mc H)(\omega) \rVert_\mc U^2 \leq \alpha \lVert (B^* \la \bar\varphi, \tilde H \bar\psi \ra_\mc H)(\omega) \rVert_\mc U
\]
which gives~\eqref{eq:opt_est}.
\end{proof}
\begin{remark}
Proposition~\ref{pro:opt_cond} provides only a necessary condition for local optimality.
Due to the nonlinear structure of the problem \eqref{eq:opt_prob}, we expect that there also exist non-optimal critical points of $j$, as well as local optima that are not global.
\end{remark}
Proposition~\ref{pro:opt_cond} implies the following interesting restrictions on the support and direction of the optimal measure.
\begin{proposition}[Characterization of frequency support]\label{thm:supp_characterization}
Let $\bar u$, $\bar\psi$ and $\bar\varphi$ be as in Proposition~\ref{pro:opt_cond}.
Then we have
\begin{align}
\operatorname{supp}\lvert \bar{u} \rvert &\subset \{\; \omega \in \Omega \mid \lVert (B^* \la \bar\varphi, \tilde H \bar\psi \ra_\mc H)(\omega) \rVert_{\mc U} = \alpha \;\}, \label{eq:opt_supp}\\
-\alpha\bar{u}'(\omega) &= (B^* \la \bar\varphi, \tilde H \bar\psi \ra_\mc H)(\omega), \quad \lvert \bar u \rvert\text{-almost everywhere.}\label{eq:opt_dirctn}
\end{align}
\end{proposition}
\begin{proof}
Writing equation \eqref{eq:opt_contr} as an integral yields
\begin{equation}\label{eq:int_b0}
\int_\Omega \Big(\alpha + \la (B^* \la \bar\varphi, \tilde H \bar\psi \ra_\mc H)(\omega), \bar{u}'(\omega) \ra_{\mc U}\Big) \diff{\lvert \bar u \rvert(\omega)} = 0.
\end{equation}
For the integrand we obtain by the Cauchy--Bunyakovsky--Schwarz (CBS) inequality, using $\lVert \bar{u}'(\omega) \rVert = 1$ and \eqref{eq:opt_est},
\begin{equation}\label{eq:intd_b0}
\alpha + \la (B^* \la \bar\varphi, \tilde H \bar\psi \ra_\mc H)(\omega), \bar{u}'(\omega) \ra_{\mc U} \geq \alpha - \lVert (B^* \la \bar\varphi, \tilde H \bar\psi \ra_\mc H)(\omega) \rVert_\mc U  \geq 0.
\end{equation}
Therefore \eqref{eq:int_b0} yields
\[
\alpha + \la (B^* \la \bar\varphi, \tilde H \bar\psi \ra_\mc H)(\omega), \bar{u}'(\omega) \ra_{\mc U} = 0
\]
for $\lvert \bar{u} \rvert$-almost all $\omega \in \Omega$.
For those $\omega$ the CBS inequality in \eqref{eq:intd_b0} was sharp.
This implies \eqref{eq:opt_dirctn} and
\[
\lVert (B^* \la \bar\varphi, \tilde H \bar\psi \ra_\mc H)(\omega) \rVert_{\mc U} = \alpha
\]
for $\lvert \bar{u} \rvert$-almost all $\omega \in \Omega$.
Since $\omega \mapsto \lVert (B^* \la \bar\varphi, \tilde H \bar\psi \ra_\mc H)(\omega) \rVert_{\mc U}$ is continuous this implies \eqref{eq:opt_supp}.
\end{proof}

For specific control operators $B$, equation~\eqref{eq:opt_dirctn} implies additional regularity for $\bar u'(\omega)$.
For example, in the case of the control operator from Example~\ref{ex:bloch_h1} we obtain the regularity $\bar u'(\omega) \in H^2(0, T)$.

The relation \eqref{eq:opt_supp} for the support of the optimal measure gives us the following corollary.
\begin{corollary} \label{cor:compact_supp}
Let $\bar u$ be a local minimizer of \eqref{eq:red_opt_prob}.
Then $\operatorname{supp} \lvert \bar u \rvert$ is compact.
\end{corollary}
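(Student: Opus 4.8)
The plan is to derive compactness of $\operatorname{supp}|\bar u|$ directly from the support characterization \eqref{eq:opt_supp} in Proposition~\ref{pro:supp_direction}. That characterization writes the support as a level set,
\[
  \operatorname{supp}|\bar u| \subset \bigl\{\, \omega \in \Omega \mid \lVert (B^* \la \bar\varphi, \tilde H \bar\psi \ra_\mc H)(\omega) \rVert_{\mc U} = \alpha \,\bigr\} =: S,
\]
so it suffices to show that $S$ is compact. Since $\operatorname{supp}|\bar u|$ is by definition closed, and a closed subset of a compact set is compact, the corollary will follow once I know $S$ is contained in a compact subset of $\Omega$.

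First I would invoke the regularity built into the functional-analytic setting: by assumption (item~4 in the general setting) the predual operator $B^*$ maps $L^q(0,T;\R^L)$ into $C_0(\Omega;\mc U)$. The function $\la \bar\varphi, \tilde H \bar\psi \ra_\mc H$ lies in $L^q(0,T;\R^L)$ for every $q$ since $\bar\psi,\bar\varphi \in C([0,T];\mc H)$ have norm $1$ by Lemma~\ref{lem:norm_pres} and the $H_\ell$ are bounded; hence $B^*\la \bar\varphi, \tilde H \bar\psi \ra_\mc H \in C_0(\Omega;\mc U)$. The key property of $C_0$ functions is that they vanish at infinity: for any $\varepsilon>0$ the set $\{\omega\in\Omega \mid \lVert (B^*\la \bar\varphi, \tilde H \bar\psi \ra_\mc H)(\omega)\rVert_{\mc U} \geq \varepsilon\}$ is contained in a compact subset of $\Omega$. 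Applying this with $\varepsilon = \alpha > 0$ shows that $S$ is contained in a compact set $C\subset\Omega$.

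Finally, $S$ itself is closed in $\Omega$ because $\omega\mapsto \lVert (B^*\la \bar\varphi, \tilde H \bar\psi \ra_\mc H)(\omega)\rVert_{\mc U}$ is continuous (again since $B^*\la \bar\varphi, \tilde H \bar\psi \ra_\mc H \in C_0(\Omega;\mc U)$ and the norm on $\mc U$ is continuous), and $\{\alpha\}$ is closed. Therefore $S = S \cap C$ is a closed subset of the compact set $C$, hence compact; and $\operatorname{supp}|\bar u|$, being closed and contained in $S$, is likewise compact. There is essentially no obstacle here — the entire content has already been established in Proposition~\ref{pro:supp_direction}, and the only ingredient to make explicit is the "vanishing at infinity" defining property of $C_0(\Omega;\mc U)$, which forces any superlevel set at a positive threshold to be relatively compact. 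I would state the argument in two or three sentences.
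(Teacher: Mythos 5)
Your proof is correct and follows essentially the same route as the paper: both arguments combine the support characterization \eqref{eq:opt_supp} with the vanishing-at-infinity property of $B^*\la \bar\varphi, \tilde H \bar\psi\ra_{\mc H} \in C_0(\Omega;\mc U)$ to trap $\operatorname{supp}\lvert\bar u\rvert$ inside a compact set, then conclude via closedness of the support. The only cosmetic difference is that the paper uses the threshold $\alpha/2$ where you use $\alpha$ itself; both work.
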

\begin{proof}
Since $B^*\la \bar\varphi, \tilde H \bar\psi\ra \in C_0(\Omega; \mc U)$ we know that there is a compact set $K \subset \Omega$ such that $B^*\la \bar\varphi, \tilde H \bar\psi\ra \leq \alpha / 2$ for all $\omega \not\in K$.
Using \eqref{eq:opt_supp} this implies $\operatorname{supp}\lvert \bar u \rvert \subset K$.
Therefore $\operatorname{supp} \lvert \bar u \rvert$ is compact as a closed subset of the compact set $K$.
\end{proof}
This corollary is of significant physical interest.
It says that although the frequency domain $\Omega$ might be unbounded, optimal solutions will always have bounded support.
Controls from experiments, of course, always have this property, because arbitrarily fast oscillations are not realizable.
But it is remarkable that such oscillations in our theoretical controls can be rules out rigorously.

For most of the specific control operators considered in this work, a significantly stronger result holds.
\begin{theorem}[Frequency sparsity]\label{thm:finite_supp}
Let $\bar u$ be a local minimizer of \eqref{eq:red_opt_prob} and let $\Omega$, $\mc U$ and $B$ be given as in one of the Examples~\ref{ex:bloch_h1}, \ref{ex:gabor_l2}, \ref{ex:fourier}.
Then $\operatorname{supp} \lvert \bar u \rvert$ is finite.
\end{theorem}
\begin{proof}
The results follows from \eqref{eq:opt_supp} by an analyticity argument.
In the setting of the Examples~\ref{ex:bloch_h1}--\ref{ex:fourier}, for every $f \in L^q(0, T)$, the map $\omega \mapsto (B^*f)(\omega)$ is real analytic.
This is standard for the Fourier and Gabor transformation, and follows from analytic dependence on $\omega$ of the right hand side in~\eqref{ODE} for the dual two-scale operator.
If $\Omega \neq \R$ then $B^*f$ can be extended in the natural way to an analytic function on $\R$.
In all cases, $(B^*f)(\omega) \to 0$ if $\omega \to \pm \infty$.

For $f = \la \bar \varphi, \tilde H \bar \psi \ra_\mc H$ we obtain the analyticity of $\omega \mapsto \lVert (B^* \la \bar\varphi, \tilde H \bar\psi \ra_\mc H)(\omega) \rVert_{\mc U}^2$, which implies that the value $\alpha^2$ is attained either everywhere in $\R$ or only at discrete points.
Since the function vanishes at infinity and the support is compact by Corollary~\ref{cor:compact_supp}, the support condition~\eqref{eq:opt_supp} implies the finiteness of $\operatorname{supp} \lvert \bar u \rvert$.
\end{proof}
This result implies that the optimal control is in fact a finite sum of Dirac measures in frequency with values in $\mc U$.
Example~\ref{ex:gabor} cannot be treated with the same technique. Although the map $\omega \mapsto (B^*f)(\omega)$ is complex analytic for $f \in L^q(0, T)$ if $\Omega$ is identified with a subset of $\C$, the analyticity does not carry over to to the absolute values.

%
%
%
\section{Numerical results} \label{sec:numerics}
In this section we apply the framework for sparse time-frequency control to different quantum systems.
First we will describe our numerical approach.
This includes a short discussion of the discretization and the regularization of the optimal control problem.
Then we will present two examples.
The first example is the finite-dimensional system from Example~\ref{ex:atom}.
It serves to illustrate basic effects of sparse control of quantum systems.
The second example is the two-level Schr{\"o}dinger system from Example~\ref{ex:mol2}.
The focus in this more challenging example will be the effect of different control spaces on the resulting optimal controls.

\subsection{Numerical approach}
Our numerical approach relies on the following three steps.
First, we discretize the measure space by a finite sum of Dirac measures with values in a finite-dimensional Hilbert space.
Then, we Huber-regularize the corresponding finite-dimensional nonsmooth problem.
Finally, we solve the resulting smooth optimization problem with a quasi-Newton method.

The first step depends on $\Omega$, of course.
In our examples we always have $\Omega \subset \R^k$ for $k \in \{1, 2\}$ and $\Omega$ is an interval or the product of two intervals.
We fix a uniform (tensor) grid $\Omega^h$ and choose measures supported at those points as our ansatz space.
Those measures can always be written as finite sums of Dirac measures.
In this discrete setting the measure norm reduces to an $\ell^1$ norm for the coefficients from $\mc U$ multiplying the Dirac measures.
To obtain a discrete problem we also need to discretize the Hilbert space $\mc U$ and the quantum system.
In our examples we have $\mc U = H^1_0(0, T; \C)$ or $\mc U = L^2(0, T; \C)$ where we use piecewise linear finite elements on a uniform grid with the appropriate discrete norms, or $\mc U = \C$ were we do not need to discretize.
The discrete Hilbert space is denoted by $\mc U^h$.
The control operator $B^h$ maps discrete controls to piecewise linear functions.
The discretization of the quantum system depends on the system at hand.
We approximate the time evolution of the discretized quantum system by a generalized Suzuki--Trotter method, see~\cite{HL:2015:Pre}.
Together we obtain a finite-dimensional optimization problem that is non-smooth and non-convex.

To deal with the nondifferentiability of the norm at the origin we Huber-regularize this non-smooth problem.
We replace the norm of $\mc U$ in the $\ell^1(\mc U^h)$ norm by the function $h^\theta\colon \mc U \to \R$ given by
\[
h^\theta(z) = \begin{cases}
\lVert z \rVert_\mc U - \frac{\theta}{2}, & \text{if } \lVert z \rVert_\mc U > \theta, \\
\frac{1}{2\theta} \lVert z \rVert_\mc U^2, & \text{if } \lVert z \rVert_\mc U \leq \theta,
\end{cases}
\]
for some regularization parameter $\theta$.
The function $h$ has the following two important properties:
it is differentiable, and the derivatives of $h^\theta$ and the derivatives of the norm of $\mc U$ have the same behavior outside of a small neighborhood of zero.
The first property makes the cost functional differentiable.
The second property preserves the sparsity of solutions, in the sense that frequency regions with zero control amplitude turn into regions with control amplitude below the explicit threshold $\theta$.
More precisely we obtain the following theorem which holds e.g.\ in the case $\Omega = [\omega_-, \omega_+] \cap h \Z$ for some mesh size $h > 0$.
\begin{theorem}[Characterization of frequency support after frequency discretization and Huber regularization]\label{thm:supp_characterization_huber}
Let $\Omega$ be a discrete set of admissable frequencies
and choose any Huber regularization parameter $\theta > 0$.
Consider the optimal control problem~\eqref{eq:opt_prob} with the norm $\lVert u \rVert_{\mc M(\Omega; \mc U)}$ replaced by the Huber regularized $\ell^1(\mc U)$ norm $\sum_{\omega \in \Omega} h^\theta(u(\omega))$.
Then optimal controls $\bar u$ satisfy
\begin{equation}\label{eq:huber_supp}
\{\, \omega \in \Omega \mid \lVert \bar u(\omega) \rVert_\mc U \geq \theta \,\} = \{\, \omega \in \Omega \mid \lVert (B^*\la \bar\varphi, \tilde H \bar\psi \ra)(\omega)\rVert_\mc U = \alpha \,\}.
\end{equation}
\end{theorem}
\begin{proof}
In this case the cost is differentiable and the control is a function rather than a measure with respect to frequency, and the first order optimality condition $0 = j'(\bar u)(\delta u)$ acquires the simple form
\[
 0 = (B^*\la \bar\varphi, \tilde H \bar\psi \ra)(\omega) + \alpha \nabla h^\theta(\bar u(\omega))\quad \text{for all $\omega \in \Omega$}.
\]
Since $\nabla h^\theta(z)$ equals $z / \lVert z \rVert_\mc U$ for $\lVert z \rVert > \theta$ and $z / \theta$ for $\lVert z \rVert_\mc U \leq \theta$, the assertion follows.
\end{proof}
In the numerical examples below, only a few frequencies were contained in the superlevel set on the left which numerically replaces the frequency support of $u$.
See e.g.\ Figure~\ref{fig:var_opt_cond_0d}.

We solved the resulting smooth problem with a quasi-Newton method.
Since the dimension of the control space can become quite large with our approach we chose the memory efficient L-BFGS method, see~\cite{NW:2006:SV}.
For the numerical realization gradients for the discretized problems were used.
The optimization method was terminated as soon as the $\ell^2(\mc U^h)$ norm of the gradient relative to the largest gradient was below a tolerance of at least $10^{-6}$.

The cost parameter is always chosen such that one achieves at least $95\%$ of the control objective, i.e.\ $\frac{1}{2} \la \psi(T), \mc O \psi(T) \ra_\mc H \leq 0.025$.
An automated choice of $\alpha$ would be helpful to obtain useful cost parameters for a variety of problems.

The result of this nonlinear optimization problem also depends on the initial guess for the control.
For small $\alpha$, the initial guess for the control uses a fixed element in $\mc U^h$  together with a random complex phase for the different frequencies.
For larger $\alpha$, where such a generic initial guess leads to convergence of the method to suboptimal critical points near the origin, we use optimal solutions for smaller $\alpha$ as initial guess.


%
%
\subsection{Three level system}
As our first example we chose a typical finite-dimensional projection of an atom in
a laser field, see Example~\ref{ex:atom}.
We use the matrices
\[
H_0 =
\begin{pmatrix*}[r]
-2 & 0 & 0\\
0 & -1 & 0\\
0 & 0 & \hphantom{-}2
\end{pmatrix*},
\quad H_1 =
\begin{pmatrix*}
0 & 0 & 1\\
0 & 0 & 1\\
1 & 1 & 0
\end{pmatrix*}
\]
This corresponds for instance to a 1s, 2s, and 3p state.
Note that the coupling matrix elements~\eqref{eq:dip} between the 1s and 2s state vanishes, i.e.\ the transition 1s $\to$ 2s is ``forbidden'' since these states have equal parity.
On the other hand the transitions 1s $\to$ 3p and 3p $\to$ 2s are allowed.
The control objective is to reach the third eigenstate starting from the ground state.
The initial condition and the observation operator are then given by
$\psi_0 = (1, 0, 0)$ and $\mc O = \operatorname{diag}(1, 1, 0)$.
We use a frequency band $\Omega = [2, 5]$ discretized with $100$ grid points.
The expected transition frequencies $\omega_1 = 3$ and $\omega_2 = 4$ are contained in $\Omega$.
We chose a time horizon of $T = 100$ and a time grid with $4096$ points.
The time horizon was chosen large enough to allow for sufficiently many periods with the transition frequencies.
We chose the cost from Example~\ref{ex:bloch_h1}, i.e.\ $\lVert u \rVert_{\mc M(\Omega;\, H^1_0(0, T; \C))}$ with the control operators given by~\eqref{eq:B_twoscale_nomeasure}--\eqref{eq:B_twoscale}, and a cost parameter of $\alpha = 0.1$.
The control objective was achieved to more than $99.999\%$.
\begin{figure}
\centering
\begin{subfigure}[b]{0.30\textwidth}
\includegraphics[width=\textwidth]{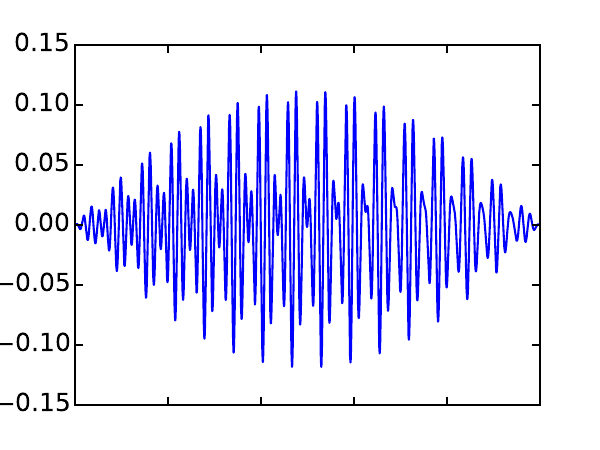}
\caption{}
\label{fig:field_0d_tf}
\end{subfigure}
~
\begin{subfigure}[b]{0.30\textwidth}
\includegraphics[width=\textwidth]{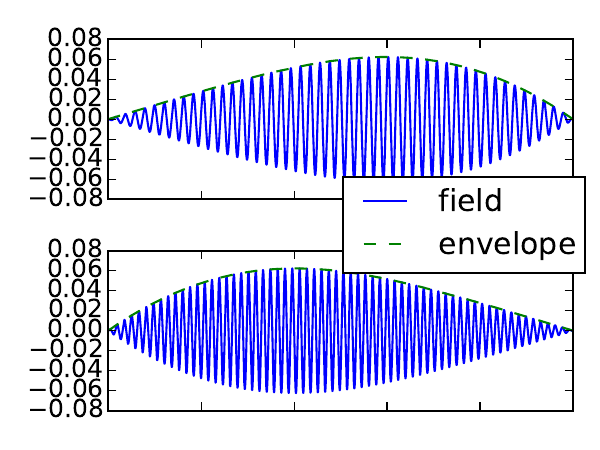}
\caption{}
\label{fig:two_fields_0d_tf}
\end{subfigure}
~
\begin{subfigure}[b]{0.30\textwidth}
\includegraphics[width=\textwidth]{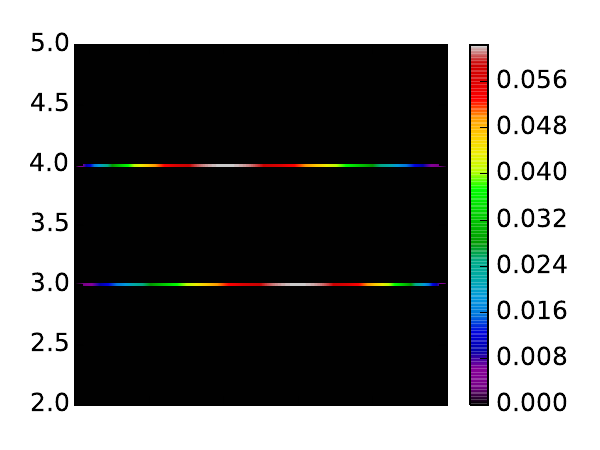}
\caption{}
\label{fig:control_0d_tf}
\end{subfigure}
\caption{%
Optimal control when the cost is chosen as a measure norm with respect to frequency and the $H^1_0$ norm with respect to time (Example~\ref{ex:atom}).
(a) Control field as a function of time.
(c) Time-frequency representation $u(\omega, t)$ (color indicates absolute value).
(b) The contributions due to the two active frequencies of the optimal field.
}
\label{fig:controls_0d}
\end{figure}

The great advantage of the measure space control is that we can decompose the field into simple components.
Figure~\ref{fig:controls_0d} shows the optimal control.
We see that only two frequencies have a visible contribution.
They correspond to the two Bohr frequencies $\omega_1$ and $\omega_2$.
The time profiles are smoothly switched on and off and remain active during the whole time.
This is consistent with the choice $\mc U = H^1_0(0, T; \C)$, which promotes smoothness and non-locality in time.
The field for the first transition reaches its maximum before the field for the second transition.
This reflects the intuitive idea that we have to induce the transition between levels one and three before that between levels three and two.

In fact, unlike most previous control-theory-based forcing fields (for an exception see~\cite{MLT:2007:SJNA} where a weighted $L^2$ regularization is used), the field obtained here bears considerable resemblance to the simple and intuitive few-parameter pulses which have been used by laser physicists for a long time.
Compare, in particular, the two pulses in Figure 9 of~\cite{BTS:1998:RMP}, whose achievement of the control objective was beautifully demonstrated experimentally.

\begin{figure}
\centering
\begin{subfigure}[b]{0.48\textwidth}
\includegraphics[width=\textwidth]{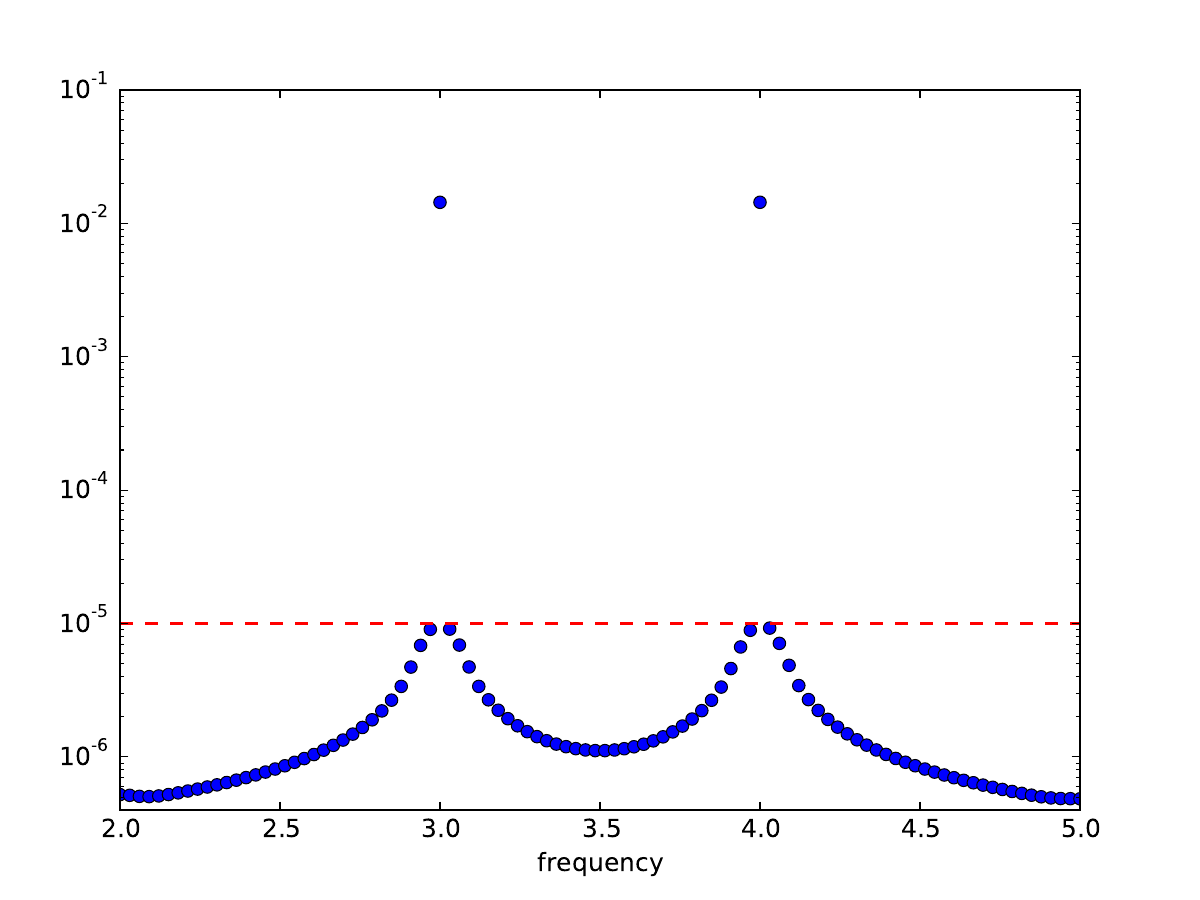}
\caption{}
\label{fig:var_measure}
\end{subfigure}
~
\begin{subfigure}[b]{0.48\textwidth}
\includegraphics[width=\textwidth]{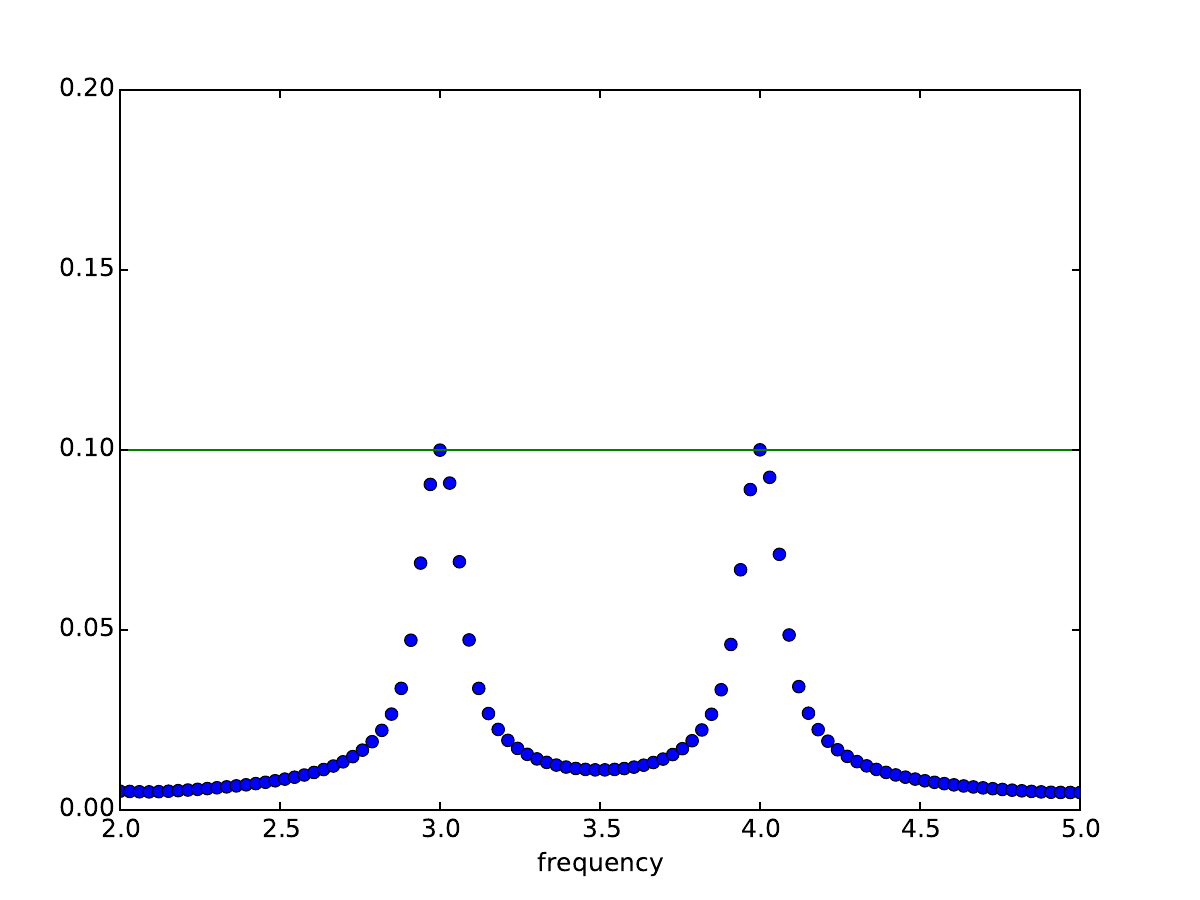}
\caption{}
\label{fig:opt_cond_0d}
\end{subfigure}
\caption{
Detailed numerical illustration of frequency sparsity of the optimal control from Figure~\ref{fig:controls_0d}.
(b) By Theorem~\ref{thm:supp_characterization_huber} the optimal control should vanish off the frequencies where the norm $\lVert (B^* \la \varphi, \tilde H \psi \ra)(\omega) \rVert_{\mc U}$ (dots in (b)) reaches the cost parameter $\alpha$ (solid line in (b)).
(a) The numerical optimal control (dots in (a)), instead of vanishing, drops by three orders of magnitude below the threshold given by the numerical regularization parameter $\theta$ (dashed line in (a)), precisely as theoretically predicted by equation~\eqref{eq:huber_supp}.
}
\label{fig:var_opt_cond_0d}
\end{figure}

Figure~\ref{fig:var_opt_cond_0d} gives a more detailed numerical illustration of the frequency sparsity of the optimal control.
Due to the effect of Huber regularization with $\theta > 0$, the numerical analog of the support of $u$ is the superlevel set $\{\, \omega \mid \lVert u(\omega) \rVert_\mc U > \theta \,\}$ (see equation~\eqref{eq:huber_supp}), which is seen to consist of just two frequencies.
%
%
\subsection{Schr\"odinger dynamics on two potential energy surfaces}\label{sec:numerics_mol2}

In this second example we consider a Schr{\"o}dinger system on two potential energy surfaces as arising in the laser control of chemical reactions, see Example~\ref{ex:mol2}.
The spectral gap between the two potential energy surfaces  varies depending on the position of the nuclear wave function and therefore a much larger variety of frequencies is potentially useful for successful control.
We also expect an additional time structure in the controls due to the movement of the densities in space.
Therefore it is much more challenging to obtain simple controls for this example compared to the previous one.
We focus on a comparison between controls generated for different choices of the space $\mc U$ of time profiles and the control operator $B$.

For simplicity we limit ourselves to one reaction coordinate, i.e.\ space dimension $d=1$.
The control objective is to reach the potential well on the right starting from the potential well on the left, see Figure~\ref{fig:multiple_pes}.
The initial state $\psi_0$ is a Gaussian located in the lower well.
The observation operator $\mc O$ is the projection on the complement of functions with support on the lower surface on the right of the potential barrier.
Instead of the physical coupling given by~\eqref{eq:diptwolevel} we use a coupling operator given by the multiplication with the reaction coordinate on the diagonal and the identity on the off-diagonal.

The energy differences between the two potential energy surfaces measured at the local minima of the lower surface are approximately $0.074$ and $0.048$.
Therefore we expect optimal controls to contain the two frequencies $\omega_1 \approx 0.074$ and $\omega_2 \approx 0.048$.
We chose a time horizon of $T = 3000$ and a time grid with $2048$ points.
The time horizon was chosen large enough to allow for sufficiently many periods with the Bohr frequencies $\omega_1$ and $\omega_2$, and for sufficient movement of the wave function in space.
For the discretization in space we use a simple finite difference scheme on the domain $[-4, 4]$ with $256$ grid points.
We validated that this discretization is sufficient for the range of parameters relevant to this application.

We compare the resulting optimal fields for different choices of the frequency domain $\Omega$, the Hilbert space $\mc U$ of admissible time profiles, and
the control operator $B$.
We also compare them to the optimal
field
for the classical Hilbert space cost functional with $L^2(0, T; \R)$
norm.
In particular we choose the following setups.
\begin{itemize}
\item \textbf{ Two-scale synthesis, smooth functions of time.}
$\Omega = [1/30, 1/10]$, $\mc U = H^1_0(0, T; \C)$, $B$ as in Example~\ref{ex:bloch_h1}.
The frequency band $\Omega$ contains the expected transition frequencies $\omega_1$ and $\omega_2$.
It is discretized with $100$ grid points.
We discretize $\mc U$ with linear finite elements.
The time grid corresponds to the grid of the time stepping.
This results in $100 \cdot 2 \cdot 2048 = 409600$ real degrees of freedom.
\item \textbf{ Gabor synthesis, $L^2$ functions of time.} $\Omega = [1/30, 1/10]$, $\mc U = L^2(0, T; \C)$ and $B$ as in Example~\ref{ex:gabor_l2}, with the Gaussian kernel $k$ suitably adapted to generate homogeneous Dirichlet boundary conditions.
We discretize $\Omega$ and $\mc U$ as before.
For the evaluation of $B$ we explicitly construct the matrix $K$ corresponding to the kernel $k$.
\item \textbf{ Fourier synthesis, constant functions of time.} $\Omega = [1/30, 1/10]$, $\mc U = \C$, $B$ the Fourier synthesis operator, see Example~\ref{ex:fourier}.
The frequency band $\Omega$ is discretized with $100$ grid points.
This results in $2 \cdot 100 = 200$ real degrees of freedom.
\item \textbf{ Time-frequency Gabor synthesis.} $\Omega = [1/30, 1/10] \times [0, T]$, $\mc U = \C$, $B$ as in Example~\ref{ex:gabor}.
The time-frequency cylinder $\Omega$ is discretized by a tensor grid.
In frequency direction we use a regular grid with $100$ grid points.
In time direction we use a grid of $14$ points.
This results in $100 \cdot 14 \cdot 2 = 2800$ real degrees of freedom.
\item \textbf{ Standard $L^2$ cost.} This means we directly minimize over $v \in L^2(0,T;\R)$ the funtional $J(\psi,v)=\frac12\langle\psi(T), \mc O \psi(T))\rangle + \alpha ||v||_{L^2}^2$.
We use linear finite elements on the time grid of the time stepping method.
This results in $2048$ real degrees of freedom.
%
%
\end{itemize}

\begin{figure}
\centering
\hspace{0.7cm}
\parbox{0.30\textwidth}{\centering time}\!
\parbox{0.30\textwidth}{\centering frequency}\!\!\!\!\!\!\!\!\!\!\!
\parbox{0.30\textwidth}{\centering time-frequency\\representation}
\\
\raisebox{.1\height}{\scalebox{0.75}{\rotatebox{90}{\begin{tabular}{c}$\mc M(\Omega; H^1_0)$\\ 2-scale synth.\end{tabular}}}}
\includegraphics[width=0.30\textwidth]{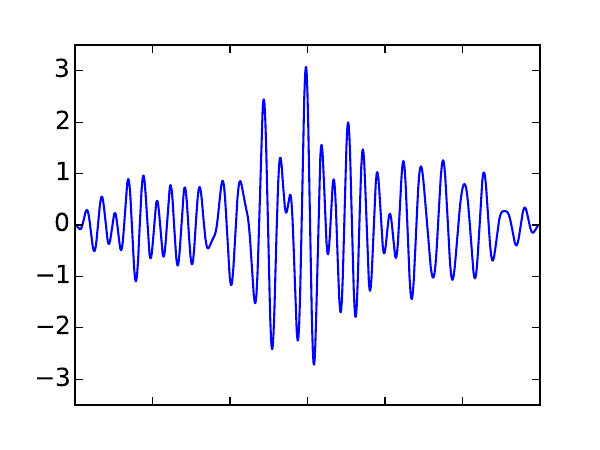}\!
\includegraphics[width=0.30\textwidth]{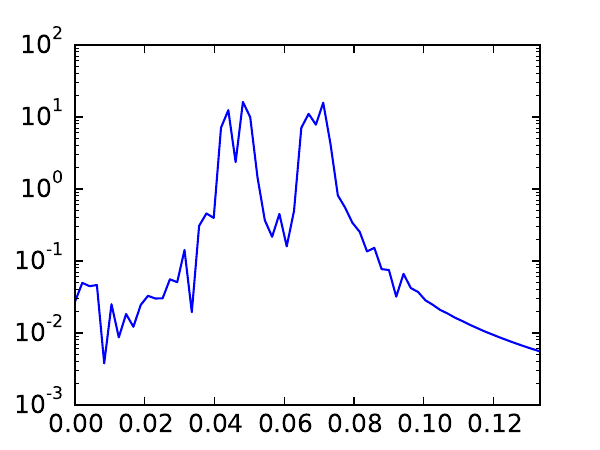}\!\!\!
\includegraphics[width=0.30\textwidth]{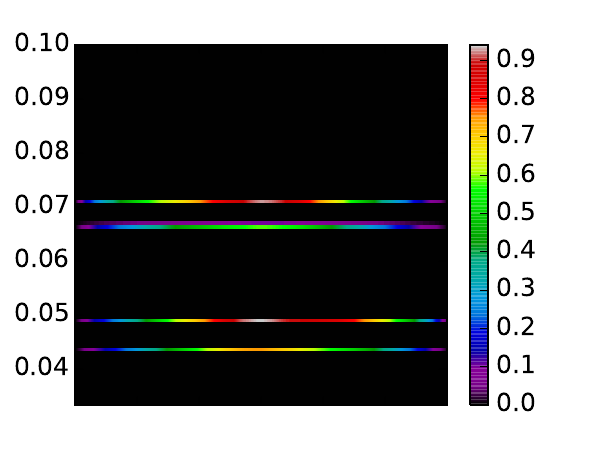}
\\
\raisebox{.2\height}{\scalebox{0.75}{\rotatebox{90}{\begin{tabular}{c}$\mc M(\Omega; L^2)$\\ Gabor synth.\end{tabular}}}}
\includegraphics[width=0.30\textwidth]{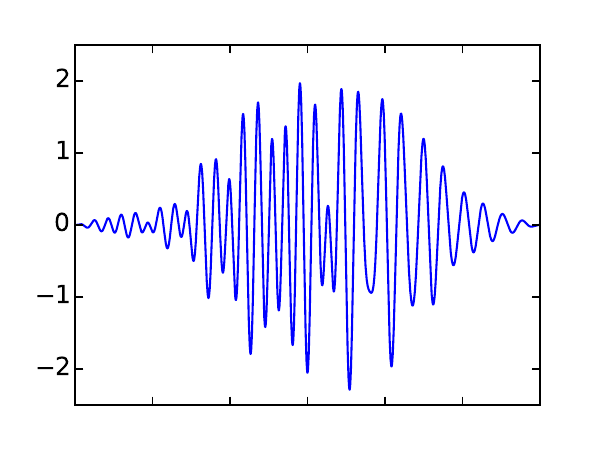}\!
\includegraphics[width=0.30\textwidth]{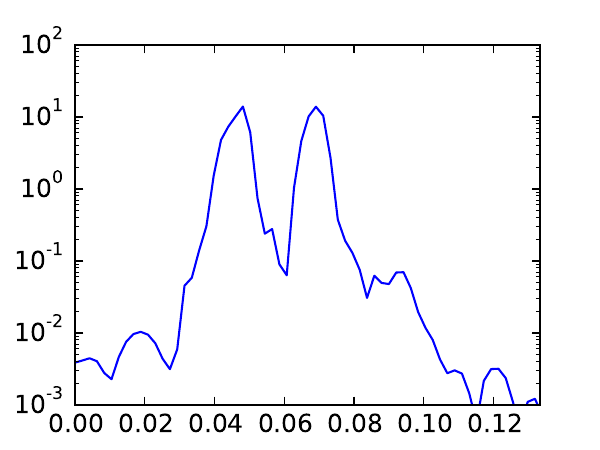}\!\!\!
\includegraphics[width=0.30\textwidth]{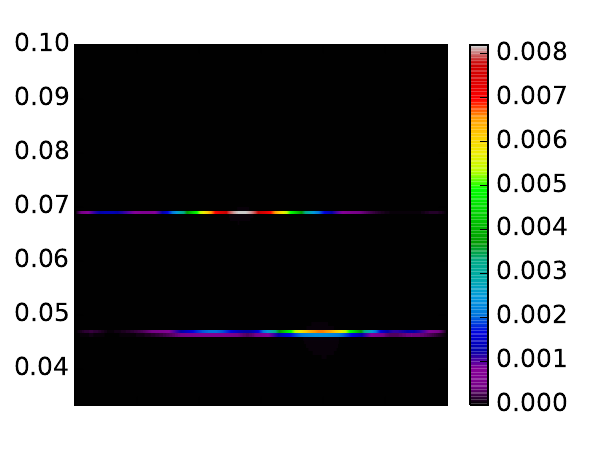}
\\
\raisebox{.1\height}{\scalebox{0.75}{\rotatebox{90}{\begin{tabular}{c}$\mc M(\Omega; \C)$\\ Fourier synth.\end{tabular}}}}
\includegraphics[width=0.30\textwidth]{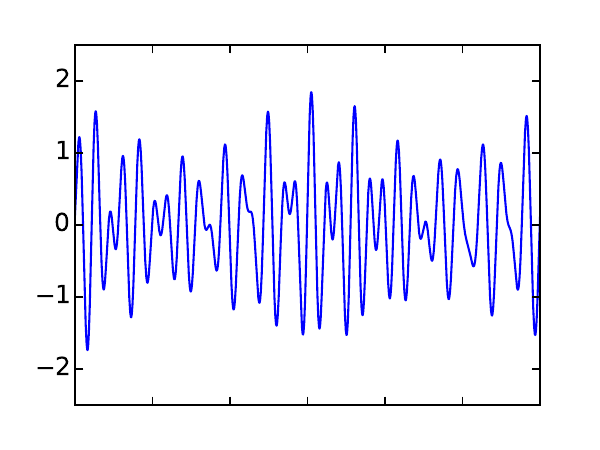}\!
\includegraphics[width=0.30\textwidth]{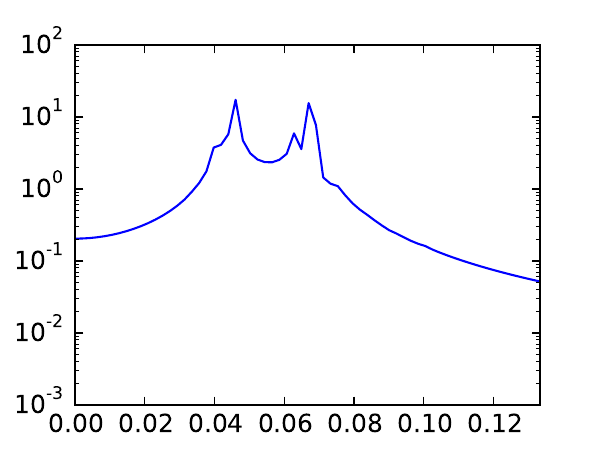}\!\!\!
\includegraphics[width=0.30\textwidth]{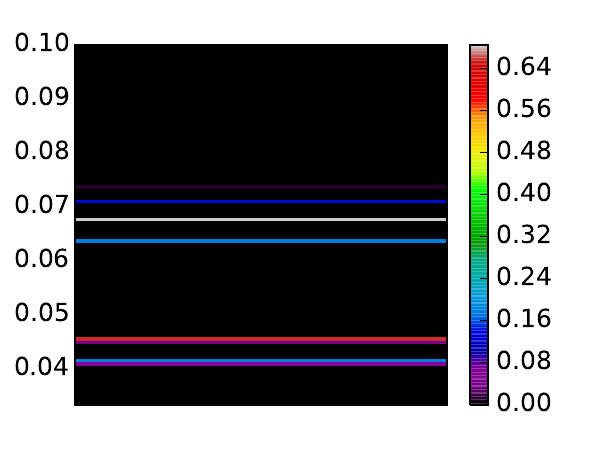}
\\
\scalebox{0.75}{\rotatebox{90}{\begin{tabular}{c}$\mc M(\Omega\times[0, T]; \C)$\\ Gabor synth.\end{tabular}}}
\includegraphics[width=0.30\textwidth]{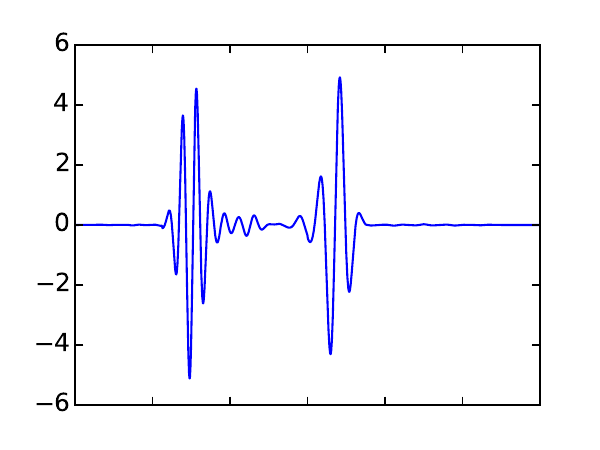}\!
\includegraphics[width=0.30\textwidth]{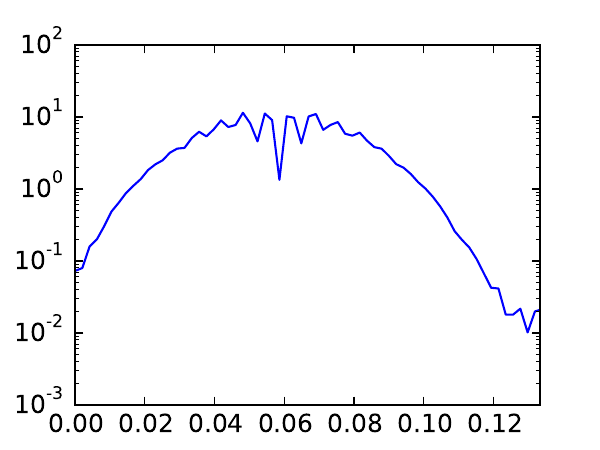}\!\!\!
\includegraphics[width=0.30\textwidth]{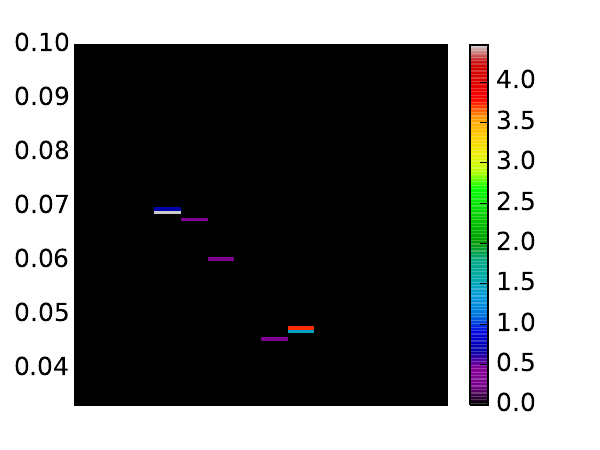}
\\
\raisebox{0.45\height}{\scalebox{0.75}{\rotatebox{90}{\begin{tabular}{c}$L^2(0, T)$\\ $\operatorname{Id}$\end{tabular}}}}
\includegraphics[width=0.30\textwidth]{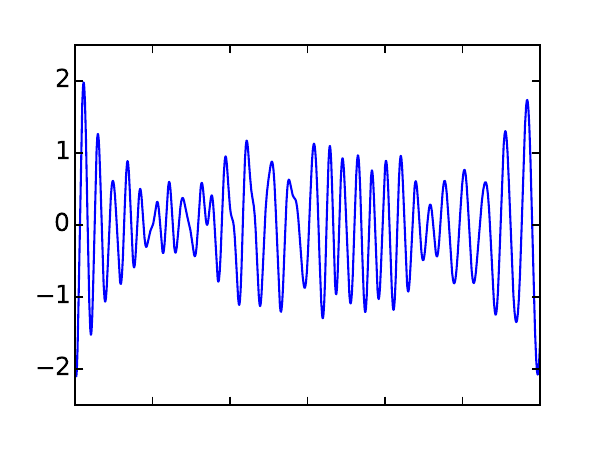}\!
\includegraphics[width=0.30\textwidth]{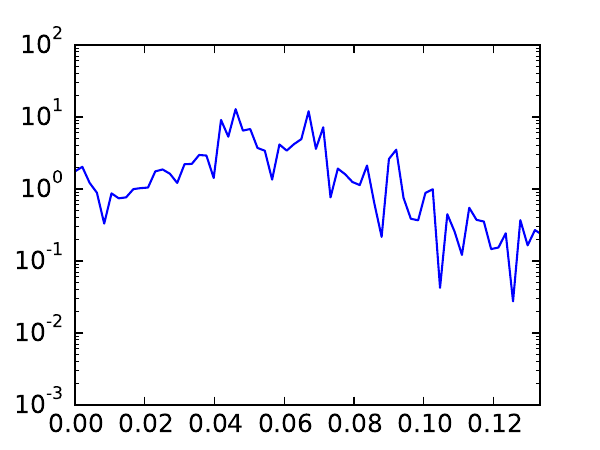}\!\!\!
\includegraphics[width=0.30\textwidth]{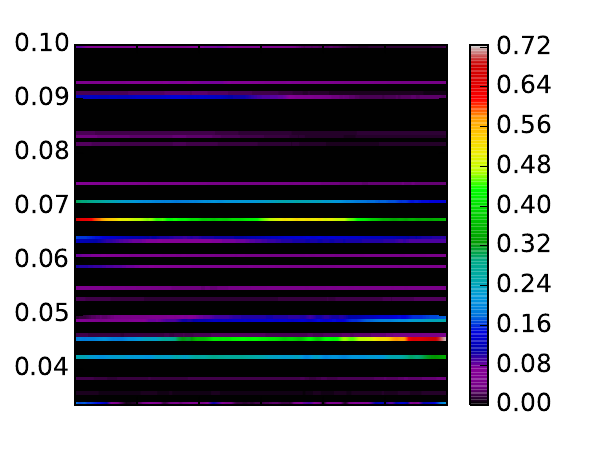}
\caption{
Optimal controls for Schr\"odinger dynamics on two potential energy surfaces.
Rows: Different choices of cost functionals and control operators.
Columns: Time, frequency, and time-frequency representation of the optimal controls (i.e.\ $(Bu)(t)$, $\widehat{Bu}(t)$ and $u(\omega, t)$).
In the rightmost column, the absolute values of the optimal measures are plotted in the time-frequency plane.
}
\label{fig:controls_1d}
\end{figure}

For the simulations we chose $\alpha$ such that probability to end up in the desired subspace is near the value of $95\%$.

Figure~\ref{fig:controls_1d} shows the optimal controls.
We see that their structure depends on the different cost functionals, and that the measure cost~\eqref{eq:cost} for the two-scale and Gabor time-frequency representation leads to an extremely small number of active frequencies.
In the following we will discuss the most significant differences and similarities.

Turning to the second column of Figure~\ref{fig:controls_1d} we focus our attention to the regions around the two Bohr frequencies  $\omega_1$ and $\omega_2$.
The regions correspond to the transitions up from the first well and down into the second well, respectively.
The expected and desired behavior of the frequencies is clearly obtained in the first three choices of the cost functional.
This structure becomes less pronounced for the time-frequency Gabor approach, where the stronger localization of the pulses in time leads to broader regions in the frequency representation, and for the $L^2$ control, which has the frequency profile that is most difficult to interpret.

The third column of Figure~\ref{fig:controls_1d} illustrates the sparse time-frequency structure of the optimal controls.
To compare with the standard $L^2$ control $\bar v$, we also computed a sparse representation $u$ of the latter, by a posteriori minimization of $\lVert Bu - \bar v \rVert_{L^2(0, T)}^2 + \alpha \lVert u \rVert_{\mc M(\Omega; H^1(0, T; \C))}$ with the two-scale synthesis operator $B$ and $\alpha=10^{-4}$.

Figure~\ref{fig:controls_1d} shows that time-frequency representation of the $L^2$ control contains many more active frequencies compared to the other controls.

It is interesting that the space $\mc M(\Omega; H^1_0(0, T; \C))$ leads to more active frequencies compared to the approach with space $\mc M(\Omega; L^2(0, T; \C))$ with the Gabor synthesis operator.
The Fourier and the time-frequency Gabor approaches lead to larger but still reasonably small number of active frequencies.
The choice of a specific cost functional may be guided by the concrete application under consideration.

%
%
%
\section{Conclusion}
In summary, measure valued costs imposed on time-frequency representations of the electric field as introduced in this paper systematically produce frequency sparse controls, in contrast to standard $L^2$ costs.
The resulting controls resemble physically intuitive controls designed by experimentalists.
We hope that the measure-space, time-frequency approach will reduce the current gap between numerical controls on the one side and experimental implementation and physical intuition on the other.
The flexibility of the approach can be further exploited to construct controls suited for concrete experimental setups.
\\[4mm]
\textbf{ Acknowledgements.} The work of F.~Henneke was funded by DFG through the International Research Training Group IGDK 1754.

\bibliographystyle{plain}
\bibliography{sparse_quantum_control}{}

\begin{thebibliography}{10}

\bibitem{ABCLDKA:2002:M3AS}
A.~Auger, A.~Ben, Haj Yedder, E.~Cances, C.~Le Bris, C.~M. Dion, A.~Keller, and
  O.~Atabek.
\newblock Optimal laser control of molecular systems: Methodology and results.
\newblock {\em Math. Models Methods Appl. Sci}, pages 1281--1315.

\bibitem{BKZB:2008:ACP}
G.~G. Balint-Kurti, S.~Zou, and A.~Brown.
\newblock {\em Optimal Control Theory for Manipulating Molecular Processes},
  pages 43--94.
\newblock John Wiley \& Sons, Inc., 2008.

\bibitem{BMS:1982:SJCO}
J.~M. Ball, J.~E. Marsden, and M.~Slemrod.
\newblock Controllability for distributed bilinear systems.
\newblock {\em SIAM J. Control Optim.}, 20(4):575--597, 1982.

\bibitem{BTS:1998:RMP}
H.~Bergmann, H.~Theuer, and B.~W. Shore.
\newblock Coherent population transfer among quantum states of atoms and
  molecules.
\newblock {\em Rev. Mod. Phys.}, 70(3):1003--1025, 1998.

\bibitem{BCGGJ:2002:JMP}
U.~Boscain, G.~Charlot, J.-P. Gauthier, St{\'e}phane Gu{\'e}rin, and
  Hans-Rudolf Jauslin.
\newblock Optimal control in laser-induced population transfer for two- and
  three-level quantum systems.
\newblock {\em J. Math. Phys.}, 43(5):2107--2132, 2002.

\bibitem{CHW:2012:SJO}
E.~Casas, R.~Herzog, and G.~Wachsmuth.
\newblock Optimality conditions and error analysis of semilinear elliptic
  control problems with {$L^1$} cost functional.
\newblock {\em SIAM J. Optim.}, 22(3):795--820, 2012.

\bibitem{C:2012:A}
T.~Chambrion.
\newblock Periodic excitations of bilinear quantum systems.
\newblock {\em Automatica J. IFAC}, 48(9):2040--2046, 2012.

\bibitem{C:1990:SIAM}
F.~H. Clarke.
\newblock {\em Optimization and nonsmooth analysis}, volume~5 of {\em Classics
  in Applied Mathematics}.
\newblock Society for Industrial and Applied Mathematics (SIAM), Philadelphia,
  PA, second edition, 1990.

\bibitem{CK:2011:COCV}
Christian Clason and Karl Kunisch.
\newblock A duality-based approach to elliptic control problems in
  non-reflexive {B}anach spaces.
\newblock {\em ESAIM Control Optim. Calc. Var.}, 17(1):243--266, 2011.

\bibitem{D:2008:CH}
D.~D'Alessandro.
\newblock {\em Introduction to quantum control and dynamics}.
\newblock Chapman \& Hall/CRC Applied Mathematics and Nonlinear Science Series.
  Chapman \& Hall/CRC, Boca Raton, FL, 2008.

\bibitem{Getal:2015:EPJD}
Steffen~J Glaser, Ugo Boscain, Tommaso Calarco, Christiane~P Koch, Walter
  K{\"o}ckenberger, Ronnie Kosloff, Ilya Kuprov, Burkhard Luy, Sophie Schirmer,
  Thomas Schulte-Herbr{\"u}ggen, et~al.
\newblock Training schr{\"o}dinger’s cat: Quantum optimal control.
\newblock {\em The European Physical Journal D}, 69(12):1--24, 2015.

\bibitem{HRG:2013:PRA}
M.~Hellgren, E.~R\"as\"anen, and E.~K.~U. Gross.
\newblock Optimal control of strong-field ionization with time-dependent
  density-functional theory.
\newblock {\em Phys. Rev. A}, 88:013414, Jul 2013.

\bibitem{HL:2015:Pre}
F.~Henneke and M.~Liebmann.
\newblock A generalized {S}uzuki--{T}rotter type method in optimal control of
  coupled {S}chr\"odinger equations.
\newblock pre-print, 2015.

\bibitem{HSW:2012:SJCO}
R.~Herzog, G.~Stadler, and G.~Wachsmuth.
\newblock Directional sparsity in optimal control of partial differential
  equations.
\newblock {\em SIAM J. Control Optim.}, 50(2):943--963, 2012.

\bibitem{HMMS:2013:SJCO}
M.~Hinterm\"uller, D.~Marahrens, P.~A. Markowich, and C.~Sparber.
\newblock Optimal bilinear control of {G}ross-{P}itaevskii equations.
\newblock {\em SIAM J. Control Optim.}, 51(3):2509--2543, 2013.

\bibitem{HTKSVR:2012:PCCP}
P.~von~den Hoff, S.~Thallmair, M.~Kowalewski, R.~Siemering, and R.~de
  Vivie-Riedle.
\newblock Optimal control theory - closing the gap between theory and
  experiment.
\newblock {\em Phys. Chem. Chem. Phys.}, 14:14460--14485, 2012.

\bibitem{IK:2007:SJCO}
K.~Ito and K.~Kunisch.
\newblock Optimal bilinear control of an abstract {S}chr\"odinger equation.
\newblock {\em SIAM Journal on Control and Optimization}, 46(1):274--287, 2007.

\bibitem{KHK:2010:JOTA}
K.~Kormann, S.~Holmgren, and H.O. Karlsson.
\newblock A {F}ourier-coefficient based solution of an optimal control problem
  in quantum chemistry.
\newblock {\em Journal of Optimization Theory and Applications}, 147:491--506,
  2010.

\bibitem{KPV:2014:SJCO}
K.~Kunisch, K.~Pieper, and B.~Vexler.
\newblock Measure valued directional sparsity for parabolic optimal control
  problems.
\newblock {\em SIAM J. Control Optim.}, 52(5):3078--3108, 2014.

\bibitem{L:1993:SV}
S.~Lang.
\newblock {\em Real and functional analysis}, volume 142 of {\em Graduate Texts
  in Mathematics}.
\newblock Springer-Verlag, New York, third edition, 1993.

\bibitem{LSTT:2009:PR}
M.~Lapert, D.~Sugny, R.~Tehini, and G.~Turinici.
\newblock Monotonically convergent optimal control theory of quantum systems
  with spectral constraints on the control field.
\newblock {\em Physical Review A: Atomic, Molecular and Optical Physics},
  79:063411, 2009.

\bibitem{LY:1995:BB}
X.~J. Li and J.~M. Yong.
\newblock {\em Optimal control theory for infinite-dimensional systems}.
\newblock Systems \& Control: Foundations \& Applications. Birkh\"auser Boston
  Inc., Boston, MA, 1995.

\bibitem{MLT:2007:SJNA}
Yvon Maday, Julien Salomon, and Gabriel Turinici.
\newblock Monotonic parareal control for quantum systems.
\newblock {\em SIAM J. Numer. Anal.}, 45(6):2468--2482 (electronic), 2007.

\bibitem{M:2009:AMUC}
L.~Meziani.
\newblock On the dual space {$C^*_0(S,X)$}.
\newblock {\em Acta Math. Univ. Comenian. (N.S.)}, 78(1):153--160, 2009.

\bibitem{NW:2006:SV}
J.~Nocedal and S.~J. Wright.
\newblock {\em Numerical optimization}.
\newblock Springer Series in Operations Research and Financial Engineering.
  Springer, New York, second edition, 2006.

\bibitem{PDR:1988:PR}
A.~P. Peirce, M.~A. Dahleh, and H.~Rabitz.
\newblock Optimal control of quantum-mechanical systems: existence, numerical
  approximation, and applications.
\newblock {\em Phys. Rev. A (3)}, 37(12):4950--4964, 1988.

\bibitem{R:1937:PR}
I.~I. Rabi.
\newblock Space quantization in a gyrating magnetic field.
\newblock {\em Phys. Rev.}, 51:652--654, Apr 1937.

\bibitem{RS:1975:AP}
Michael Reed and Barry Simon.
\newblock {\em Methods of modern mathematical physics. {II}. {F}ourier
  analysis, self-adjointness}.
\newblock Academic Press [Harcourt Brace Jovanovich, Publishers], New
  York-London, 1975.

\bibitem{RBKMAHR:2006:JCP}
Q.~Ren, G.~G. Balint-Kurti, F.~R. Manby, M.~Artamonov, T.-S. Ho, and H.~Rabitz.
\newblock Quantum control of molecular vibrational and rotational excitations
  in a homonuclear diatomic molecule: A full three-dimensional treatment with
  polarization forces.
\newblock {\em The Journal of Chemical Physics}, 124(1):014111, 2006.

\bibitem{RSDTB:2011:PCCP}
S.~Ruetzel, C.~Stolzenberger, F.~Dimler, D.~J. Tannor, and T.~Brixner.
\newblock Adaptive coherent control using the von {N}eumann basis.
\newblock {\em Phys. Chem. Chem. Phys.}, 13:8627--8636, 2011.

\bibitem{SKSCKMDHMCNJ:2014:NJP}
J.~Scheuer, X.~Kong, R.~S. Said, J.~Chen, A.~Kurz, L.~Marseglia, J.~Du, P.~R.
  Hemmer, S.~Montangero, T.~Calarco, B.~Naydenov, and F.~Jelezko.
\newblock Precise qubit control beyond the rotating wave approximation.
\newblock {\em New Journal of Physics}, 16(9):093022, 2014.

\bibitem{SSBK:2010:JCP}
S.~Sharma, H.~Singh, and G.~G. Balint-Kurti.
\newblock Genetic algorithm optimization of laser pulses for molecular quantum
  state excitation.
\newblock {\em The Journal of Chemical Physics}, 132(6):064108, 2010.

\bibitem{S:1990:W}
B.~W. Shore.
\newblock {\em The theory of coherent atomic excitation}.
\newblock Wiley, New York, NY, 1990.

\bibitem{S:2009:COAP}
G.~Stadler.
\newblock Elliptic optimal control problems with {$L^1$}-control cost and
  applications for the placement of control devices.
\newblock {\em Comput. Optim. Appl.}, 44(2):159--181, 2009.

\bibitem{T:2003:SV}
S.~Teufel.
\newblock {\em Adiabatic perturbation theory in quantum dynamics}, volume 1821
  of {\em Lecture Notes in Mathematics}.
\newblock Springer-Verlag, Berlin, 2003.

\bibitem{TLR:2004:PRE}
G.~Turinici, C.~Le~Bris, and H.~Rabitz.
\newblock Efficient algorithms for the laboratory discovery of optimal quantum
  controls.
\newblock {\em Phys. Rev. E}, 70:016704, Jul 2004.

\bibitem{vWB:2008:IP}
G.~von Winckel and A.~Borz{\`{\i}}.
\newblock Computational techniques for a quantum control problem with
  {$H^1$}-cost.
\newblock {\em Inverse Problems}, 24(3):034007, 2008.

\bibitem{vWBV:2009:SJSC}
G.~von Winckel, A.~Borz{\`{\i}}, and S.~Volkwein.
\newblock A globalized {N}ewton method for the accurate solution of a dipole
  quantum control problem.
\newblock {\em SIAM J. Sci. Comput.}, 31(6):4176--4203, 2009/10.

\bibitem{VM:2006:OCAM}
G.~Vossen and H.~Maurer.
\newblock On {$L^1$}-minimization in optimal control and applications to
  robotics.
\newblock {\em Optimal Control Appl. Methods}, 27(6):301--321, 2006.

\end{thebibliography}

\end{document}